\newtheorem{theorem}{Theorem}
\newtheorem{rem}{Remark}
\newtheorem{prop}{Proposition}
\newtheorem{lemma}{Lemma}
\def\arraystretch{0.9}
\begin{document}
\title[Symmetric designs in terms of the geometry of binary simplex codes]{Symmetric $(15,8,4)$-designs in terms of the geometry of binary simplex codes of dimension $4$}
\author{Mark Pankov, Krzysztof Petelczyc, Mariusz \.Zynel}
\keywords{point-line geometry, collinearity graph, simplex code, symmetric design}
\subjclass[2020]{51E20,51E22}
\address{Mark Pankov: Faculty of Mathematics and Computer Science, 
University of Warmia and Mazury, S{\l}oneczna 54, 10-710 Olsztyn, Poland}
\email{pankov@matman.uwm.edu.pl}
\address{Krzysztof Petelczyc, Mariusz \.Zynel: Faculty of Mathematics, University of Bia{\l}ystok, Cio{\l}kowskiego 1M, 15-245 Bia{\l}ystok, Poland}
\email{kryzpet@math.uwb.edu.pl, mariusz@math.uwb.edu.pl}
\maketitle

\begin{abstract}
Let $n=2^k-1$ and $m=2^{k-2}$ for a certain $k\ge 3$.
Consider the point-line geometry of $2m$-element subsets of an $n$-element set.
Maximal singular subspaces of this geometry correspond to binary simplex codes of dimension $k$.
For $k\ge 4$ the associated collinearity graph contains maximal cliques different from 
maximal singular subspaces.
We investigate maximal cliques corresponding to symmetric $(n,2m,m)$-designs.
The main results concern the case $k=4$ and give
a geometric interpretation of the five well-known
symmetric $(15,8,4)$-designs. 
\end{abstract}

\section{Introduction}
Points of the projective space ${\rm PG}(n-1,2)$ can be naturally identified with non-empty subsets of an $n$-element set.
The subgeometry ${\mathcal P}_m(n)$ formed by all $2m$-element subsets is investigated in  \cite{PPZ} 
(note that ${\mathcal P}_m(n)$ is defined only if $n\ge 3m$). 
Maximal singular subspaces of ${\mathcal P}_m(n)$ correspond to equidistant binary linear codes. 
We will suppose that $m=2^{k-2}$ and  $n=4m-1=2^{k}-1$ for a certain integer $k\ge 3$.
Then the maximal singular subspaces are related to binary simplex codes of dimension $k$. 
The geometry ${\mathcal P}_2(7)$  is a rank $3$ polar space and 
every maximal clique of the corresponding collinearity graph is a maximal singular subspace 
(a Fano plane). For $k\ge 4$ the collinearity graph contains maximal cliques which are not singular subspaces.
If such a clique consists of $n$ elements, then it  defines a symmetric $(2^{k}-1, 2^{k-1},2^{k-2})$-design
(whose points are elements of the $n$-element set and blocks are elements of the clique).

We consider so-called {\it centered} maximal $n$-element cliques of the collinearity graph of  ${\mathcal P}_m(n)$.
Every such clique is the union of $2^{k-1}-1$ lines passing through a common point and it can be obtained as a special product of two maximal $(2m-1)$-element cliques of 
the collinearity graph of ${\mathcal P}_{m/2}(2m-1)$.
Using this observation and the fact that every maximal clique of the  collinearity graph of ${\mathcal P}_2(7)$ is a Fano plane,
we show that there are precisely four types of centered maximal $15$-element cliques in the  collinearity graph of ${\mathcal P}_4(15)$
(Theorem \ref{centered4-15}). Also, we construct a maximal $15$-element cliques in the  collinearity graph of ${\mathcal P}_4(15)$
which is not centered; it is  the union of a maximal singular subspace of  ${\mathcal P}_4(15)$ (${\rm PG}(3,2)$) with a plane deleted and a plane disjoint with this subspace
(Theorem \ref{non-center}).
In this way we come to a geometric interpretation of the five well-known, pairwise non-isomorphic, symmetric $(15,8,4)$-designs. 

One way to establish our results (Theorems \ref{centered4-15} and \ref{non-center}) would be a laborious step-by-step verification based on the list of $(15,8,4)$-designs
(see, for example, \cite[Table 1.23, p. 11]{CD}).
We use more theoretical arguments showing that the classification of  centered maximal $15$-element cliques in the  collinearity graph of ${\mathcal P}_4(15)$
is equivalent to the classification of bijective transformations of a Fano plane;
this explains the emergence of these four types.
At the end, we provide a simple geometric construction of a non-centered maximal $15$-element clique.

As an application of our  approach, we simplify  the proof of Praeger-Zhou result \cite[Proposition 1.5]{PZ} 
concerning the action of automorphism groups of symmetric $(15,8,4)$-designs
(Section 6).

\section{Basics}
A {\it point-line geometry} is a pair $({\mathcal P},{\mathcal L})$,
where ${\mathcal P}$ is a non-empty set whose elements are called {\it points} and 
${\mathcal L}$ is a family formed by subsets of ${\mathcal P}$ called {\it lines}. 
Every line contains at least two points and the intersection of two distinct lines contains at most one point.
Two distinct points are  {\it collinear} if there is a line containing them.
The associated {\it collinearity graph} 
is the simple graph whose vertex set is ${\mathcal P}$ and two distinct vertices are connected by an edge if they are collinear points. 
A subset ${\mathcal X}\subset {\mathcal P}$ is called a {\it subspace} if for any two collinear points from ${\mathcal X}$
the line joining them is contained in ${\mathcal X}$. 
A subspace is  {\it singular} if any two distinct points of this subspace are collinear. 

Consider the $n$-dimensional vector space ${\mathbb F}^n$ over the two-element field ${\mathbb F}=\{0,1\}$.
The vectors
$$e_{1}=(1,0,\dots,0),\dots,e_{n}=(0,\dots,0,1)$$
form the standard basis of ${\mathbb F}^n$ 
and every non-zero vector of ${\mathbb F}^n$  is $e_I=\sum_{i\in I}e_i$,
where $I$ is a non-empty subset of the $n$-element set $[n]=\{1,\dots,n\}$, i.e.
the $i$-th coordinate of $e_I$ is $1$ if $i\in I$ and $0$ otherwise.
So, the points of the associated projective space ${\rm PG}(n-1,2)$ can be naturally  identified with
the non-empty subsets of $[n]$ and for any distinct non-empty subsets $X,Y\subset [n]$
the third point on the line containing $X,Y$ is the symmetric difference $X\triangle Y$.
Observe that the set of all $t$-element subsets of $[n]$ contains lines of ${\rm PG}(n-1,2)$
only when $t$ is even and $n\ge \frac{3}{2}t$
(see \cite{PPZ} for the details).

For a positive integer $m$ satisfying $n\ge 3m$ we
denote by  ${\mathcal P}_m(n)$ the point-line geometry whose points are all $2m$-element subsets of $[n]$
and lines are the lines of ${\rm PG}(n-1,2)$ consisting of such subsets.
Two  $2m$-element subsets $X,Y\subset [n]$ are  collinear points of  ${\mathcal P}_m(n)$ if and only if $|X\cap Y|=m$.
Maximal singular subspaces of ${\mathcal P}_m(n)$ correspond to maximal equidistant binary linear codes of length $n$ and Hamming weight $2m$.

Every permutation on the set $[n]$ (a monomial linear automorphism of ${\mathbb F}^n$) 
induces an automorphism of the geometry ${\mathcal P}_m(n)$.
All automorphisms of ${\mathcal P}_m(n)$ are determined in \cite{PPZ}. 
In some cases, for example, if the below condition \eqref{eq-simp} is satisfied, 
there are automorphisms which are not induced by permutations. 
A description of automorphisms of the collinearity graph of ${\mathcal P}_m(n)$ is an open problem,
every automorphism of the geometry is an automorphism of the collinearity graph and the converse  is not necessarily true.

\begin{rem}\label{rem-Mac}{\rm
Let ${\mathcal S}$ and ${\mathcal S}'$ be maximal singular subspaces of ${\mathcal P}_m(n)$.
Suppose that their dimension is not less than $2$.
Then every isomorphism of ${\mathcal S}$ to ${\mathcal S}'$ is induced by a linear isomorphism 
between the corresponding  equidistant binary linear codes
(the Fundamental Theorem of Projective Geometry).
The latter can be extended to a monomial linear automorphism of ${\mathbb F}^n$ by MacWilliams theorem
(see, for example, \cite[Section 7.9]{HP}).
Therefore, every isomorphism of ${\mathcal S}$ to ${\mathcal S}'$ can be extended to an automorphism of ${\mathcal P}_m(n)$
induced by a permutation.
}\end{rem}

From this moment we assume that 
\begin{equation}\label{eq-simp}
m=2^{k-2}\;\mbox{ and  }\;n=4m-1=2^{k}-1
\end{equation}
for a certain integer $k\ge 2$.
Then every maximal singular subspace of  ${\mathcal P}_m(n)$ corresponds  to a binary simplex code of dimension $k$
(by \cite{Bonis}, see also \cite[Theorem 7.9.5]{HP}, such codes  can be characterized as 
maximal equidistant binary linear codes of length $n=2^k-1$ and Hamming weight $2m=2^{k-1}$).
There is the unique binary simplex code of dimension $2$ and ${\mathcal P}_{1}(3)$ is a line formed by three points. 
The geometry ${\mathcal P}_2(7)$ is a rank $3$ polar space,  see \cite[Subsection 4.1]{KPP} and \cite[Proposition 2]{PPZ}.

By \cite{Ryser}, every maximal clique of the collinearity graph of ${\mathcal P}_m(n)$ contains at most $n$ elements.
Therefore, every maximal singular subspace of  ${\mathcal P}_m(n)$ is a maximal clique of the collinearity graph 
(it is ${\rm PG}(k-1,2)$ and, consequently, contains $2^{k}-1=n$ elements).
Since ${\mathcal P}_2(7)$ is a polar space, every maximal clique of the collinearity graph of ${\mathcal P}_2(7)$ is a maximal singular subspace
(if a point of a polar space is collinear to two distinct points on a line, then this point is collinear to all points on this line).
In the case when $k\ge 4$, there exist  maximal cliques of the collinearity graph which are not singular subspaces.
If ${\mathcal C}$  is  a maximal clique of the collinearity graph  containing $n$ elements, then 
it can be considered as a symmetric $(n,2m,m)$-design whose points are elements of $[n]$ and whose blocks are elements of ${\mathcal C}$.

Recall that a symmetric $(n,2m,m)$-design is an incidence structure formed by $n$ points and $n$ blocks.
Each block consists of $2m$ points and the intersection of two distinct blocks contains precisely $m$ points.

A well-known example of symmetric $(n,2m,m)$-design is 
the design of  points and hyperplane complements of ${\rm PG}(k-1,2)$.
Every hyperplane $H$ of ${\rm PG}(k-1,2)$ and its complement $H^c$
contain precisely  $2^{k-1}-1$ and $2^k-1-(2^{k-1}-1)=2^{k-1}=2m$ points, respectively.
The intersection of the complements of two distinct hyperplanes of ${\rm PG}(k-1,n)$ consists of $2^{k-2}=m$ points.

\begin{prop}\label{prop0}
The design of points and hyperplane complements of ${\rm PG}(k-1,2)$ is isomorphic to 
the design corresponding to a maximal singular subspace of ${\mathcal P}_m(n)$.
\end{prop}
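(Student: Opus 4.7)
The plan is to use the explicit description of the binary simplex code of dimension $k$ and match it directly with the incidence structure of points and hyperplane complements of $\mathrm{PG}(k-1,2)$.

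First I would fix a maximal singular subspace ${\mathcal S}$ of ${\mathcal P}_m(n)$ and recall, as noted earlier in the paper, that ${\mathcal S}$ arises from a binary simplex code $C$ of dimension $k$. The standard model for such a code is the image of the linear map $\mathbb{F}^k\to\mathbb{F}^n$ whose matrix has, as its columns, all $n=2^k-1$ non-zero vectors of $\mathbb{F}^k$. After indexing the coordinates of $\mathbb{F}^n$ by the non-zero vectors of $\mathbb{F}^k$, i.e.\ by the points of $\mathrm{PG}(k-1,2)$, the codewords of $C$ are precisely the vectors
\[
 c_\varphi=\bigl(\varphi(v)\bigr)_{v\in\mathbb{F}^k\setminus\{0\}},
\]
where $\varphi$ ranges over the dual space $(\mathbb{F}^k)^*$.

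Next I would identify the non-zero codewords with the blocks of the design associated with ${\mathcal S}$: a non-zero codeword $c_\varphi$ (coming from $\varphi\neq 0$) has support equal to $\{v\in\mathbb{F}^k\setminus\{0\}\,:\,\varphi(v)\neq 0\}$, which is exactly the set-theoretic complement of the hyperplane $\ker\varphi\setminus\{0\}$ of $\mathrm{PG}(k-1,2)$. Thus the map sending $\varphi\mapsto(\ker\varphi)^c$ is a bijection between the $n$ non-zero codewords of $C$ and the $n$ hyperplane complements of $\mathrm{PG}(k-1,2)$. Combined with the natural bijection between the coordinate set $[n]$ and the point set of $\mathrm{PG}(k-1,2)$ used to build $C$, this gives a pair of bijections between points and between blocks of the two designs.

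Finally I would check incidence: a point $v$ belongs to the complement of a hyperplane $\ker\varphi$ precisely when $\varphi(v)\neq 0$, i.e.\ precisely when the coordinate of $c_\varphi$ at position $v$ is $1$, i.e.\ precisely when $v$ is in the support of $c_\varphi$. Since each support has $2^{k-1}=2m$ elements and the point/block counts match, this verifies that the two $(n,2m,m)$-designs are isomorphic; the intersection sizes are then automatic from the structure of either side. The only real obstacle is purely notational: keeping straight the three parallel labellings of the $n$ points (coordinates of $\mathbb{F}^n$, non-zero vectors of $\mathbb{F}^k$, points of $\mathrm{PG}(k-1,2)$). Once the identification via columns of the simplex generator matrix is fixed, the rest is a transparent dictionary between ``being in the support of $c_\varphi$'' and ``lying outside $\ker\varphi$''.
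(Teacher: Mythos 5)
Your plan is correct, but it runs in the opposite direction from the paper's proof. The paper never touches the generator matrix: it works synthetically inside the geometry of $2m$-element subsets of the point set of ${\rm PG}(k-1,2)$, taking two hyperplane complements $H_1^c,H_2^c$ and observing that $H_1^c\triangle H_2^c=H_1\triangle H_2=H_3^c$ for the third hyperplane $H_3$ through $H_1\cap H_2$; hence the $n$ hyperplane complements are closed under the line operation, so they \emph{are} a maximal singular subspace, and the design identification is immediate. You instead start from the code side: coordinates indexed by the points of ${\rm PG}(k-1,2)$, codewords $c_\varphi$ indexed by the dual space, supports equal to complements of $\ker\varphi$. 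Both arguments are short and correct; yours makes the parametrization of blocks by linear functionals explicit (which also explains why the blocks themselves carry a ${\rm PG}(k-1,2)$ structure), while the paper's avoids any choice of generator matrix. One small point to make explicit in your version: an \emph{arbitrary} maximal singular subspace corresponds to a simplex code that is only permutation-equivalent to the standard model, so either restrict to the subspace arising from the standard generator matrix (which suffices, since the proposition asserts isomorphism with \emph{a} maximal singular subspace) or note that the permutation of $[n]$ realizing the equivalence induces an isomorphism of the associated designs.
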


\begin{proof}
Let $H_1$ and $H_2$ be distinct hyperplanes of ${\rm PG}(k-1,2)$.
There is a unique hyperplane $H_3$ distinct from $H_1,H_2$ and containing $H_1\cap H_2$.
We have
$$H^c_1\triangle H^c_2=H_1\triangle H_2$$
and this  set coincides with $H^c_3$
(since $H_1\cup H_2\cup H_3$ contains all points of ${\rm PG}(k-1,2)$).
So, $H^c_1,H^c_2,H^c_3$ form a line in the geometry of $2m$-element subsets of the point set of ${\rm PG}(k-1,2)$
and all complements of hyperplanes form a singular subspace of the geometry.
This subspace contains precisely $n$ elements and, consequently, it is a maximal singular subspace.
\end{proof}

\begin{rem}\label{remH}{\rm
Symmetric block designs are closely related to {\it Hadamard matrices}, 
square matrices whose entries are either $1$ or $-1$ and whose rows are mutually orthogonal.
A Hadamard matrix is {\it normalized} if its first row and column contain only $1$.
There is a one-to-one correspondence between normalized Hadamard matrices of order $4t$
and symmetric $(4t-1,2t,t)$-designs:
if in such a matrix we replace $1$ and $-1$ by $0$ and $1$ (respectively) and remove the first column and row,
then we obtain the incidence matrix of a certain symmetric $(4t-1,2t,t)$-design.
In particular, we have a one-to-one correspondence between 
maximal $n$-element cliques of the collinearity graph of ${\mathcal P}_m(n)$
and normalized Hadamard matrices of order $n+1=2^{k}$.
}\end{rem}

 \section{Centered maximal cliques}
Let ${\mathcal C}$ be a maximal clique of the collinear graph of ${\mathcal P}_{m}(n)$.
We say that $O\in {\mathcal C}$ is a {\it center point} of ${\mathcal C}$ if 
for every $C\in {\mathcal C}\setminus\{O\}$
the line joining $C$ and $O$ is in ${\mathcal C}$, in other words,
there is $C'\in {\mathcal C}$ such that $C\triangle C'=O$.
In this case, the maximal clique ${\mathcal C}$ is called {\it centered}.
A maximal clique can contain more than one center point.
For example, every maximal singular subspace of ${\mathcal P}_{m}(n)$ is a maximal clique where each point is center.

Let $O$ be a $2m$-element subset  of $[n]$, i.e a point of ${\mathcal P}_m(n)$. The complement $O^c=[n]\setminus O$ consists of 
$n-2m=2m-1$ elements.
By  \eqref{eq-simp}, we have
$$m/2=2^{k-3}\;\mbox{ and }\;2m-1=4(m/2)-1=2^{k-1}-1.$$
We take a maximal clique ${\mathcal X}$ in the collinearity graph of
the point-line geometry formed by $m$-element subsets of $O^c$
(this geometry is naturally isomorphic to ${\mathcal P}_{m/2}(2m-1)$).
Let also ${\mathcal Y}$ be a maximal clique  in the collinearity graph of
the geometry of $m$-element subsets of a certain $(2m-1)$-element subset of $O$
(the geometry  is also isomorphic to ${\mathcal P}_{m/2}(2m-1)$). 
Suppose that ${\mathcal X}$ and ${\mathcal Y}$ both contain $2m-1$ elements, i.e.
correspond to symmetric $(2m-1,m,m/2)$-designs.

We take $Y'=O\setminus Y$ for every subset $Y\subset O$
and denote by ${\mathcal Y}'$ the set consisting of all $Y'$ such that $Y\in {\mathcal Y}$.
For every bijection $\delta:{\mathcal X}\to {\mathcal Y}$ we 
denote by ${\mathcal X}\#_{\delta} {\mathcal Y}$
the subset of ${\mathcal P}_m(n)$ formed by $O$ and all $X\cup \delta(X)$, $X\cup \delta(X)'$, where $X\in {\mathcal X}$.

\begin{prop}\label{prop1}
${\mathcal X}\#_{\delta} {\mathcal Y}$ is a centered maximal $n$-element clique of the collinearity graph of ${\mathcal P}_m(n)$
and $O$ is its center point. 
\end{prop}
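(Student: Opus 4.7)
The plan is to verify directly, by a short case analysis on the form of pairs of elements of ${\mathcal X}\#_{\delta}{\mathcal Y}$, that (a) every element is a $2m$-subset of $[n]$, (b) any two are collinear in ${\mathcal P}_m(n)$ (i.e.\ meet in exactly $m$ points), (c) $O$ is a centre point, (d) the clique is maximal. Cardinality will then force maximality by Ryser's bound already quoted in the paper.

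First I would record the basic disjointness and size facts: for every $X\in{\mathcal X}$ we have $X\subset O^c$ with $|X|=m$, while $\delta(X)\subset O_0\subset O$ with $|\delta(X)|=m$ and therefore $|\delta(X)'|=|O|-m=m$. Consequently $X\cup\delta(X)$ and $X\cup\delta(X)'$ are disjoint unions of two $m$-sets, hence $2m$-subsets of $[n]$. Counting gives $|{\mathcal X}\#_{\delta}{\mathcal Y}|=1+2(2m-1)=n$, so once ${\mathcal X}\#_{\delta}{\mathcal Y}$ is shown to be a clique, Ryser's theorem forces it to be a maximal clique of the collinearity graph.

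Next I would verify collinearity using $|A\cap B|=m$ as the criterion. Since $X\in{\mathcal X}$ and $\delta(X)\in{\mathcal Y}$ always lie on opposite sides of the partition $[n]=O^c\sqcup O$, intersections split cleanly:
\begin{itemize}
\item $O\cap(X\cup\delta(X))=\delta(X)$ and $O\cap(X\cup\delta(X)')=\delta(X)'$, each of size $m$.
\item For distinct $X_1,X_2\in{\mathcal X}$, since ${\mathcal X}$ is a clique in ${\mathcal P}_{m/2}(2m-1)$ we have $|X_1\cap X_2|=m/2$, and analogously $|\delta(X_1)\cap\delta(X_2)|=m/2$. A straightforward inclusion–exclusion then gives $|\delta(X_1)\cap\delta(X_2)'|=|\delta(X_1)'\cap\delta(X_2)'|=m/2$ as well, so each of the three types of pairs $(X_1\cup\delta(X_1))\cap(X_2\cup\delta(X_2))$, $(X_1\cup\delta(X_1))\cap(X_2\cup\delta(X_2)')$, $(X_1\cup\delta(X_1)')\cap(X_2\cup\delta(X_2)')$ has cardinality $m/2+m/2=m$.
\item For the same $X$, $(X\cup\delta(X))\cap(X\cup\delta(X)')=X$, of size $m$.
\end{itemize}

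Finally, to see that $O$ is a centre point, I would observe that the line of ${\rm PG}(n-1,2)$ through $O$ and $X\cup\delta(X)$ has third point $O\triangle(X\cup\delta(X))=\delta(X)'\cup X$, which by construction lies in ${\mathcal X}\#_{\delta}{\mathcal Y}$; the analogous computation handles the starting point $X\cup\delta(X)'$. Thus every line of ${\rm PG}(n-1,2)$ through $O$ and a second element of ${\mathcal X}\#_{\delta}{\mathcal Y}$ is contained in ${\mathcal X}\#_{\delta}{\mathcal Y}$, which is precisely the definition of a centre point. I do not foresee any genuinely hard step here: the whole argument is a bookkeeping verification, and the only care needed is to keep track of which sets live in $O$ and which in $O^c$ so that the clean $m/2+m/2=m$ pattern is visible in every case.
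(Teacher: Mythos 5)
Your proposal is correct and follows essentially the same route as the paper's proof: verify the cardinality count $2(2m-1)+1=n$, check the intersection sizes $m/2+m/2=m$ case by case using the splitting across $O^c\sqcup O$, observe that $O$, $X\cup\delta(X)$, $X\cup\delta(X)'$ are collinear, and invoke the Ryser bound for maximality. No gaps.
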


\begin{proof}
Every element of ${\mathcal X}\#_{\delta} {\mathcal Y}$ distinct from $O$ is $X\cup \delta(X)$ or $X\cup \delta(X)'$ for a certain $X\in {\mathcal X}$
and  it intersects $O$ in $\delta(X)$ or $\delta(X)'$, respectively.
It is easy to see that ${\mathcal X}\#_{\delta} {\mathcal Y}$ consists of $2|{\mathcal X}|+1=n$ elements
and $X\cup \delta(X)$, $X\cup \delta(X)'$, $O$ form a line for every $X\in {\mathcal X}$.

Let $X$ and $\tilde{X}$ be distinct elements of ${\mathcal X}$. 
Then $X\cup \delta(X)$ intersects $\tilde{X}\cup \delta(\tilde{X})$ and $\tilde{X}\cup \delta(\tilde{X})'$ 
in 
\begin{equation}\label{eq-intr}
(X\cap \tilde{X})\cup (\delta(X)\cap \delta(\tilde{X}))\;\mbox{ and }\; (X\cap \tilde{X})\cup (\delta(X)\cap \delta(\tilde{X})'),
\end{equation}
respectively.
Since $X,\tilde{X}$ and $\delta(X),\delta(\tilde{X})$ are distinct elements of ${\mathcal X}$ and ${\mathcal Y}$ (respectively),
we have
$$|X\cap \tilde{X}|=m/2\;\mbox{ and }\;|\delta(X)\cap \delta(\tilde{X})|=m/2.$$
The latter equality implies that $|\delta(X)\cap \delta(\tilde{X})'|=m/2$
(since $\delta(X),\delta(\tilde{X}),\delta(\tilde{X})'$ are $m$-element subsets of the $2m$-element set $O$).
Therefore, each of the subsets \eqref{eq-intr} consists of $m$ element and, consequently, 
$X\cup \delta(X)$ is collinear to $\tilde{X}\cup \delta(\tilde{X})$ and $\tilde{X}\cup \delta(\tilde{X})'$.
Similarly, we establish that $X\cup \delta(X)'$ is collinear to $\tilde{X}\cup \delta(\tilde{X})$ and $\tilde{X}\cup \delta(\tilde{X})'$.
So, ${\mathcal X}\#_{\delta} {\mathcal Y}$ is an $n$-element clique and, consequently, it is a maximal clique. 
\end{proof}

\begin{rem}{\rm 
Let $H_{\mathcal X}$ and $H_{\mathcal Y}$ be the normalized Hadamard matrices of order $2m$
corresponding to ${\mathcal X}$ and ${\mathcal Y}$, respectively.
Then the normalized Hadamard matrix of order $4m$ corresponding to ${\mathcal X}\#_{\delta} {\mathcal Y}$ is 
\setlength{\tabcolsep}{10pt}
\renewcommand{\arraystretch}{1.5}
$$
\left[
\begin{array}{c|c}
H_{\mathcal X} & \tilde{H}_{\mathcal Y}\\
\hline
H_{\mathcal X} & - \tilde{H}_{\mathcal Y}\\
\end{array}\right],
$$
where $\tilde{H}_{\mathcal Y}$ is the normalized Hadamard matrix obtained from  $H_{\mathcal Y}$ by the row permutation
corresponding to the bijection $\delta:{\mathcal X}\to {\mathcal Y}$.
}\end{rem}

\begin{prop}\label{prop2}
Let ${\mathcal C}$ be a centered maximal $n$-element clique of the collinearity graph of ${\mathcal P}_m(n)$
and let $O$ be a center point of ${\mathcal C}$.
There is a unique maximal $(2m-1)$-element clique ${\mathcal X}$ in  the collinearity graph of
the geometry formed by $m$-element subsets of $O^c$
and for every $(2m-1)$-element subset $Z\subset O$
there is a unique maximal $(2m-1)$-element clique ${\mathcal Y}$ in  the collinearity graph of
the geometry of $m$-element subsets of $Z$ such that 
${\mathcal C}={\mathcal X}\#_{\delta} {\mathcal Y}$
for a certain bijection $\delta:{\mathcal X}\to {\mathcal Y}$.
\end{prop}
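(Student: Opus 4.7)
The plan is to reverse the construction of Proposition~\ref{prop1}: I extract ${\mathcal X}$, ${\mathcal Y}$, and $\delta$ from the pair $({\mathcal C},O)$ using two ingredients, namely the collinearity of each element of ${\mathcal C}\setminus\{O\}$ with $O$ (which yields a canonical splitting across $O$ and $O^c$) and the center property (which pairs up these elements).

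First, since every $C\in {\mathcal C}\setminus\{O\}$ satisfies $|C\cap O|=m$, I set $X_C:=C\cap O^c$ and $Y_C:=C\cap O$, so that $|X_C|=|Y_C|=m$ and $C=X_C\cup Y_C$. The center property provides a partner $C'\in {\mathcal C}$ with $C\triangle C'=O$; splitting this symmetric difference over the disjoint union $[n]=O\sqcup O^c$ forces $X_C=X_{C'}$ and $Y_{C'}=O\setminus Y_C$. Hence ${\mathcal C}\setminus\{O\}$ partitions into $2m-1$ pairs $\{X\cup Y,\,X\cup Y'\}$, each indexed by a common $O^c$-part $X$ and carrying complementary $m$-subsets $Y,Y'\subset O$. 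I take ${\mathcal X}$ to be the set of such $X$'s.

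To see that ${\mathcal X}$ is a maximal clique in the geometry of $m$-subsets of $O^c$, I pick two distinct pairs with $O^c$-parts $X,\tilde X$ and invoke the four pairwise collinearities among $X\cup Y$, $X\cup Y'$, $\tilde X\cup \tilde Y$, $\tilde X\cup \tilde Y'$. Each such intersection has the form $(X\cap \tilde X)\sqcup(\text{subset of }O)$, and comparing the two options $\tilde Y$ versus $\tilde Y'=O\setminus\tilde Y$ forces $|Y\cap \tilde Y|=|Y\cap \tilde Y'|=m/2$, whence $|X\cap \tilde X|=m/2$. Thus ${\mathcal X}$ is a clique in ${\mathcal P}_{m/2}(2m-1)$ on the ground set $O^c$, and since it meets the Ryser bound $2m-1$ it is maximal. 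Uniqueness of ${\mathcal X}$ is tautological: in any presentation ${\mathcal C}={\mathcal X}\#_\delta{\mathcal Y}$, the set ${\mathcal X}$ must coincide with $\{C\cap O^c:C\in {\mathcal C}\setminus\{O\}\}$.

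For the second half, I fix a $(2m-1)$-subset $Z\subset O$ and let $o$ be the unique element of $O\setminus Z$. Within each complementary pair $\{Y,Y'\}$ exactly one of $Y,Y'$ avoids $o$, and I define $\delta(X)$ to be that one; then $\delta(X)'=O\setminus\delta(X)$ is automatically the other member of the pair. The same cross-intersection calculation as above (applied inside $O$) shows that ${\mathcal Y}:=\delta({\mathcal X})$ is a $(2m-1)$-element clique of $m$-subsets of $Z$, hence maximal; injectivity of $\delta$ is guaranteed by $|\delta(X)\cap\delta(\tilde X)|=m/2<m$. Reassembly yields ${\mathcal X}\#_\delta{\mathcal Y}={\mathcal C}$, and uniqueness of $({\mathcal Y},\delta)$ is forced by the requirement $\delta(X)\subset Z$. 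The only mildly delicate point is the intersection arithmetic that simultaneously yields $|X\cap\tilde X|=m/2$ and $|\delta(X)\cap\delta(\tilde X)|=m/2$; no serious obstacle is expected beyond this bookkeeping.
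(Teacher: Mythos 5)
Your proposal is correct and follows essentially the same route as the paper: split each $C\ne O$ as $X_C\cup Y_C$ across $O^c$ and $O$, use the center point to pair $C$ with $O\triangle C$, derive $|X_{\tilde C}\cap X_C|=|Y_{\tilde C}\cap Y_C|=|Y_{\tilde C}\cap Y'_C|=m/2$ from the three $m$-element intersections, and select one member of each complementary pair $\{Y,Y'\}$ by the condition of lying in $Z$. The only cosmetic difference is that the paper first assembles the full set $\tilde{\mathcal Y}$ of all $Y_C$ before cutting down to $Z$, whereas you select directly via the excluded element $o$; the substance is identical.
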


\begin{proof}
If $C\in {\mathcal C}$ is distinct from $O$, then  each of the intersections 
$$X_C=C\cap O^c\;\mbox{ and }\;Y_C=C\cap O$$ 
consists of $m$ element. 
As above, we put $Y'=O\setminus Y$ for every subset $Y\subset O$.
Since $O$ is a center point,  $O\triangle C$
(the third point on the line joining $O$ and $C$) belongs to ${\mathcal C}$ and 
the equality $O\triangle C=X_C\cup Y'_C$
implies that 
$$X_{O\triangle C}=X_C\;\mbox{ and }\;Y_{O\triangle C}=Y'_C.$$
Consider the set ${\mathcal X}$ formed by all $X_C$, $C\in {\mathcal C}\setminus\{O\}$. 
Since $O\triangle C$ belongs to ${\mathcal C}$ and $X_{O\triangle C}=X_C$,
we have $$|{\mathcal X}|=(n-1)/2=2m-1.$$
Every $\tilde{C}=X_{\tilde C}\cup Y_{\tilde C}$ belonging to ${\mathcal C}\setminus\{O,C, O\triangle C\}$ intersects each of $C=X_C\cup Y_C$, $O\triangle C=X_C\cup Y'_C$
and $O$
in an $m$-element subset. 
Since
$${\tilde C}\cap C=(X_{\tilde C}\cap X_C)\cup (Y_{\tilde C}\cap Y_C),$$
$${\tilde C}\cap (O\triangle C)=(X_{\tilde C}\cap X_C)\cup (Y_{\tilde C}\cap Y'_C),$$
$${\tilde C}\cap O={\tilde C}\cap (Y_C\cup Y'_C)=(Y_{\tilde C}\cap Y_C)\cup (Y_{\tilde C}\cap Y'_C),$$
we obtain that 
$$|X_{\tilde C}\cap X_C|+|Y_{\tilde C}\cap Y_C|=|X_{\tilde C}\cap X_C|+|Y_{\tilde C}\cap Y'_C|=|Y_{\tilde C}\cap Y_C|+|Y_{\tilde C}\cap Y'_C|=m$$
and
$$|X_{\tilde C}\cap X_C|=|Y_{\tilde C}\cap Y_C|=|Y_{\tilde C}\cap Y'_C|=m/2.$$
In particular, ${\mathcal X}$ is a clique in  the collinearity graph of
the geometry of $m$-element subsets of $O^c$. This clique consists of  $2m-1$ elements and, consequently, it is maximal.

Now, we  consider  the set $\tilde{{\mathcal Y}}$ formed by all $Y_C$, $C\in {\mathcal C}\setminus\{O\}$. 
Observe that every $Y$ belongs to $\tilde{{\mathcal Y}}$ together with $Y'$
(every $C\ne O$ belongs to  ${\mathcal C}$ together with $O\triangle C$ and $Y_{O\triangle C}=Y'_C$).
If $Y_1,Y_2\in \tilde{{\mathcal Y}}$ and $Y_2\ne Y'_1$, then $|Y_1\cap Y_2|=m/2$ (it was established above).
Therefore, for any distinct $Y_1,Y_2\in \tilde{{\mathcal Y}}$ we have
$$|Y_1\cap Y_2|=m/2\;\mbox{ or }\; Y_2=Y'_1.$$
Let $Z$ be a $(2m-1)$-element subset of $O$ and let ${\mathcal Y}$ be the set of all $Y\in \tilde{{\mathcal Y}}$ contained in $Z$.
If $Y\in {\mathcal Y}$, then $Y'\not\in {\mathcal Y}$.
This means that ${\mathcal Y}$ consists of  $(n-1)/2=2m-1$ elements and is a clique in 
the collinearity graph of the geometry of $m$-element subsets of $Z$.
This clique is maximal (since it consists of $2m-1$ elements).

For every $X\in {\mathcal X}$ there is a unique $Y\in {\mathcal Y}$ such that $X\cup Y$ belongs to ${\mathcal C}$ and we take $\delta(X)=Y$.
Then ${\mathcal C}={\mathcal X}\#_{\delta} {\mathcal Y}$.
\end{proof}

\begin{rem}{\rm
Let $Z_1$ and $Z_2$ be distinct $(2m-1)$-element subsets of $O$.
Then
$${\mathcal C}={\mathcal X}\#_{\delta_1} {\mathcal Y}_1={\mathcal X}\#_{\delta_2} {\mathcal Y}_2,$$
where ${\mathcal Y}_i$ is a maximal $(2m-1)$-element clique in  the collinearity graph of
the geometry of $m$-element subsets of $Z_i$. 
Suppose that $s$ is the element of $O$ which does not belong to $Z_2$.
Then $s\in Z_1$. Replacing every $Y\in {\mathcal Y}_1$ containing $s$ by $Y'$ we obtain ${\mathcal Y}_2$.
Therefore, the multiplying some $2m$ rows (corresponding to all $Y\in {\mathcal Y}_1$ containing $s$) in the Hadamard matrix associated to ${\mathcal Y}_1$ by $-1$ gives 
the Hadamard matrix associated to ${\mathcal Y}_2$.
}\end{rem}

Let ${\mathcal P}_1,{\mathcal P}_2,{\mathcal P}'_1,{\mathcal P}'_2$  be four exemplars of ${\rm PG}(t,2)$.
We say that maps 
$$\delta:{\mathcal P}_1\to {\mathcal P}_2\;\mbox{ and }\;\delta':{\mathcal P}'_1\to {\mathcal P}'_2$$
are {\it equivalent} if there are  isomorphisms $g_i$, $i\in \{1,2\}$ of ${\mathcal P}_i$ to ${\mathcal P}'_i$
such that 
$$\delta'=g_2\delta g^{-1}_1.$$
Now, we consider centered maximal $n$-element cliques
$${\mathcal C}={\mathcal X}\#_{\delta}{\mathcal Y}\;\mbox{ and }\;{\mathcal C}'={\mathcal X}'\#_{{\delta}'}{\mathcal Y}'$$
with center points $O$ and $O'$, respectively.
As above,  ${\mathcal X},{\mathcal X}'$ are
maximal  $(2m-1)$-element cliques  in the collinearity graphs of the geometries  formed by $m$-element subsets of $O^c$ and $O'^{c}$
(respectively) and ${\mathcal Y},{\mathcal Y}'$ are 
maximal  $(2m-1)$-element cliques  in the collinearity graphs of the geometries  formed by $m$-element subsets of $(2m-1)$-element subsets $Z\subset O$ and 
$Z'\subset O'$ (respectively).

\begin{lemma}\label{iso}
If ${\mathcal X},{\mathcal Y},{\mathcal X}',{\mathcal Y}'$ are maximal singular subspaces of the corresponding geometries
and $\delta,\delta'$ are equivalent, then there is a permutation on the set $[n]$ transferring ${\mathcal C}$ to ${\mathcal C}'$.
\end{lemma}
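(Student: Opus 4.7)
The plan is to upgrade the abstract equivalence datum $(g_1,g_2)$ to a concrete permutation of $[n]$ via Remark \ref{rem-Mac}, and then verify by a direct check on the two halves of the $\#_\delta$-construction that the resulting permutation carries ${\mathcal C}$ onto ${\mathcal C}'$.

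First I would apply Remark \ref{rem-Mac} inside the subgeometry of $m$-element subsets of $O^c$ (naturally isomorphic to ${\mathcal P}_{m/2}(2m-1)$) to lift the isomorphism $g_1:{\mathcal X}\to {\mathcal X}'$ to a bijection $\sigma_1:O^c\to O'^{c}$, and similarly lift $g_2:{\mathcal Y}\to {\mathcal Y}'$ to a bijection $\sigma_2:Z\to Z'$. Letting $s$ and $s'$ denote the unique elements of $O\setminus Z$ and $O'\setminus Z'$ respectively, I would assemble $\sigma_1$, $\sigma_2$ and the map $s\mapsto s'$ into a single permutation $\sigma$ of $[n]$. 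By construction $\sigma(O)=O'$ and $\sigma(O^c)=O'^{c}$.

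Next I would verify that $\sigma({\mathcal C})={\mathcal C}'$. Trivially $\sigma(O)=O'\in {\mathcal C}'$. For $X\in {\mathcal X}$, the element $X\cup \delta(X)$ of ${\mathcal C}$ maps to $g_1(X)\cup g_2(\delta(X))$; setting $X':=g_1(X)\in {\mathcal X}'$ and using the equivalence $\delta'=g_2\delta g_1^{-1}$, this equals $X'\cup \delta'(X')\in {\mathcal C}'$. For the complementary element $X\cup \delta(X)'$ one writes $\delta(X)'=(Z\setminus \delta(X))\cup \{s\}$; its image under $\sigma$ is then $g_1(X)\cup (Z'\setminus g_2(\delta(X)))\cup \{s'\}=X'\cup \delta'(X')'$, again a member of ${\mathcal C}'$. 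Since $|{\mathcal C}|=|{\mathcal C}'|=n$, this already forces $\sigma({\mathcal C})={\mathcal C}'$.

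The only real obstacle is making sure Remark \ref{rem-Mac} applies: it demands projective dimension at least two for the singular subspaces being identified. Since the maximal singular subspaces ${\mathcal X},{\mathcal X}',{\mathcal Y},{\mathcal Y}'$ each contain $2m-1=2^{k-1}-1$ elements, their projective dimension equals $k-2$, which is at least $2$ exactly when $k\ge 4$ --- the range in which the construction is of interest. The remainder of the argument is a routine diagram chase built directly from the definition of $\#_\delta$ and the identity $\delta'=g_2\delta g_1^{-1}$.
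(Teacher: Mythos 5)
Your proof is correct and follows essentially the same route as the paper's: lift $g_1,g_2$ to permutations of $O^c$ and $Z$ via Remark \ref{rem-Mac} (MacWilliams), extend over the single element of $O\setminus Z$, and glue into a permutation of $[n]$ carrying ${\mathcal C}$ to ${\mathcal C}'$. You merely spell out the verification and the dimension hypothesis more explicitly, where the paper reduces to $O=O'$, $Z=Z'$ and leaves the final check implicit.
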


\begin{proof}
Without loss of generality, we assume that $O=O'$ and $Z=Z'$.
By our assumption,
there are isomorphisms $g:{\mathcal X}\to {\mathcal X}'$ and $h:{\mathcal Y}\to {\mathcal Y}'$ such that 
$\delta'=h\delta g^{-1}$.
MacWillams theorem (Remark \ref{rem-Mac}) shows that  $g$ and $h$ are induced by permutations on $O^c$ and $Z$, respectively.
The second permutation can be uniquely extended  to a permutation on $O$.
The union of the above permutations is a permutation on $[n]$ sending ${\mathcal C}$ to ${\mathcal C}'$.
\end{proof}

\begin{rem}{\rm
The converse statement is not obvious. Our cliques may contain more than one center point and a permutation sending ${\mathcal C}$ to ${\mathcal C}'$
may transfer $O$ to a center point different from $O'$.
}\end{rem}

\section{Centered maximal cliques. The case $m=4$ and $n=15$}

\subsection{Classification}
In this section, we obtain  the following classification of centered maximal $15$-element cliques of the collinearity graph of ${\mathcal P}_4(15)$.

\begin{theorem}\label{centered4-15}
Let ${\mathcal C}$ be a centered maximal $15$-element clique of the collinearity graph of ${\mathcal P}_4(15)$.
Then one of the following possibilities is realized: 
\begin{enumerate}
\item[{\rm (C1)}] ${\mathcal C}$ is a maximal singular subspace of ${\mathcal P}_4(15)$ 
and every point of ${\mathcal C}$ is a center point, see Figure \ref{fig:clique1}.
\item[{\rm (C2)}] ${\mathcal C}$ is the union of three Fano planes whose intersection is a line formed by center points;
every line in ${\mathcal C}$ is contained in one of these planes, see Figure \ref{fig:clique2}.
\item[{\rm (C3)}] ${\mathcal C}$ contains a unique  Fano plane and a unique center point belonging to this plane;
every line in ${\mathcal C}$ is contained in this plane or passes through the center point, see Figure \ref{fig:clique3}. 
\item[{\rm (C4)}]  ${\mathcal C}$ contains a unique center point and every  line in ${\mathcal C}$ passes through the center point, see Figure \ref{fig:clique4}.
\end{enumerate}
For any two centered maximal $15$-element cliques of the same type there is a permutation on the set $[15]$ transferring one of these cliques to the other.
\end{theorem}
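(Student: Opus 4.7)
My plan is to apply Proposition~\ref{prop2} and Lemma~\ref{iso} to reduce the classification to a study of bijections between Fano planes up to pre- and post-composition with collineations, and then to isolate a single integer invariant that already takes exactly four values.

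Fix a center $O$ of $\mathcal{C}$. By Proposition~\ref{prop2}, $\mathcal{C}=\mathcal{X}\#_\delta\mathcal{Y}$ with $\mathcal{X},\mathcal{Y}$ maximal $7$-element cliques in the collinearity graphs of copies of $\mathcal{P}_2(7)$; since $\mathcal{P}_2(7)$ is a rank $3$ polar space, these cliques are maximal singular subspaces, hence Fano planes. The $7$ lines of $\mathcal{C}$ through $O$ have the form $\{O,X\cup\delta(X),X\cup\delta(X)'\}$, one per $X\in\mathcal{X}$. A triple $\{X_i\cup T_i\}_{i=1,2,3}$ with $T_i\in\{\delta(X_i),\delta(X_i)'\}$ and not containing $O$ is a line iff $\{X_1,X_2,X_3\}$ is a line of $\mathcal{X}$, $\{\delta(X_1),\delta(X_2),\delta(X_3)\}$ is a line of $\mathcal{Y}$, and an even number of the $T_i$ are complements; the alternative parity is excluded because $\sum\delta(X_i)\subseteq Z\subsetneq O$. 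Hence every line of $\mathcal{X}$ whose $\delta$-image is a line of $\mathcal{Y}$ spans with $O$ a Fano plane inside $\mathcal{C}$; let $\lambda(\delta)$ count such preserved lines.

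The combinatorial core is to prove $\lambda(\delta)\in\{0,1,3,7\}$. If $\delta$ preserves two lines $\ell_1,\ell_2$ of $\mathcal{X}$, they meet at a point $p$; the images $\delta(\ell_1),\delta(\ell_2)$ meet at $\delta(p)$, and the two points of $\mathcal{X}$ outside $\ell_1\cup\ell_2$ together with $p$ form the third line through $p$, while their $\delta$-images together with $\delta(p)$ form the third line through $\delta(p)$. Hence $\lambda\ge 2$ implies $\lambda\ge 3$. Suppose now the whole pencil at $p$ is preserved. Then $\delta$ is determined by a permutation of the three pairs through $p$ and a flip pattern $(\epsilon_1,\epsilon_2,\epsilon_3)\in\{0,1\}^3$ inside the pairs, and a short calculation shows that the image of a transversal line $(\alpha_1,\alpha_2,\alpha_3)$ is a line of $\mathcal{Y}$ iff the parity of $\alpha_1+\alpha_2+\alpha_3+\epsilon_1+\epsilon_2+\epsilon_3$ is a constant depending only on the labellings. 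Since the four transversal lines of $\mathcal{X}$ all share a common value of $\alpha_1+\alpha_2+\alpha_3$ modulo $2$ (they form one parity class among the eight transversals of the three pairs), either all four or none are preserved, so $\lambda\ge 4$ forces $\lambda=7$.

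The four values match the four types: $\lambda=7$ gives the seven Fano planes through $O$ that assemble into $\mathrm{PG}(3,2)$, yielding C1; $\lambda=3$ gives three Fano planes pairwise meeting in the line $L=\{O,\,p\cup\delta(p),\,p\cup\delta(p)'\}$, whose three points are easily seen to be exactly the center points, yielding C2; $\lambda=1$ gives C3; $\lambda=0$ gives C4. Uniqueness up to permutation of $[15]$ follows from Lemma~\ref{iso} once we show that any two bijections with the same $\lambda$ are equivalent. The case $\lambda=7$ is immediate. For $\lambda=3$, collineations normalize the pencil-center and its image, the stabilizer of a point in $\mathrm{Aut}(\mathrm{Fano})$ induces all permutations of the three pairs together with the even-weight Klein $4$-subgroup on flip patterns, and hence only the parity of $\sum\epsilon_i$ remains, which is determined by $\lambda=3$. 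The cases $\lambda\in\{0,1\}$ are handled analogously, normalizing the unique preserved line (when $\lambda=1$) and classifying the residual bijection between the two $4$-point sets modulo the line-stabilizer. The main obstacle is precisely the parity argument for $\lambda\ge 4\Rightarrow\lambda=7$: it relies on the combinatorial fact that the four transversal lines at a point of a Fano plane form a single parity class among the eight transversals of the three pairs through that point.
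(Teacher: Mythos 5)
Your overall strategy coincides with the paper's: reduce via Proposition~\ref{prop2} to a bijection $\delta$ between two Fano planes, show that the number of preserved lines (the paper's index, your $\lambda$) lies in $\{0,1,3,7\}$ and determines the type, and get uniqueness from Lemma~\ref{iso}. Your route to the value restriction is genuinely different and arguably cleaner: the paper (Proposition~\ref{Fano-map} and Lemma~\ref{lemma-biject}) normalizes $\delta$ by the image of a simplex and runs a case analysis, whereas you observe that two preserved lines force the whole pencil at their intersection point to be preserved, and that the four lines missing a point form a single parity class among the eight transversals of the three pairs through that point, so a preserved pencil plus one preserved transversal forces all seven lines to be preserved. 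Both observations are correct (the parity claim checks out on the standard labelling), as is your characterization of the lines of $\mathcal{C}$ avoiding $O$ via the even-complement condition, which substitutes for the paper's Lemma~\ref{plane}.

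The gap is in the second half of the theorem. Uniqueness up to a permutation of $[15]$ requires that any two bijections with the same $\lambda$ be equivalent (this is the full content of Proposition~\ref{Fano-map}), and you only actually prove this for $\lambda\in\{3,7\}$. For $\lambda=1$ the classification of ``the residual bijection between the two $4$-point sets modulo the line-stabilizer'' is asserted but not carried out; it is a finite but nontrivial check, since one must show that the index-$1$ bijections with a fixed preserved line form a single orbit under the relevant stabilizers while excluding residual bijections that accidentally preserve further lines. More seriously, for $\lambda=0$ your proposed normalization literally does not apply: there is no preserved line to normalize, and you offer no alternative. This is the hardest case; the paper handles it by showing that an index-$0$ map sends the standard simplex to a line together with an off-line point, reducing to two explicit $4$-cycles on a four-point set and exhibiting an automorphism conjugating one to the other. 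Without some such argument the single-orbit claim for $\lambda=0$, and hence the transitivity statement for type (C4), remains unproved.
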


Recall that ${\mathcal P}_2(7)$ is a rank $3$ polar space and, consequently, every maximal clique of 
the collinearity graph of ${\mathcal P}_2(7)$ is a singular subspace isomorphic to a Fano plane.
By Proposition \ref{prop2}, 
every centered maximal $15$-element clique of the collinearity graph of ${\mathcal P}_4(15)$
with a center point $O$
is ${\mathcal F}_1\#_{\delta} {\mathcal F}_2$,
where ${\mathcal F}_1,{\mathcal F}_2$ are Fano planes whose points are 
$4$-element subsets of $O^c$ and $O$ (respectively) and $\delta$ is a bijection of ${\mathcal F}_1$ to ${\mathcal F}_2$.
Note that points of ${\mathcal F}_2$ are subsets of a $7$-element subset of $O$ which is not uniquely defined.
Our proof of Theorem \ref{centered4-15} is based on a classification of bijections between ${\mathcal F}_1$ and ${\mathcal F}_2$
which will be given in the next subsection.

\subsection{Bijective maps of Fano planes}
As above, we assume that ${\mathcal F}_1$ and ${\mathcal F}_2$ are Fano planes.
Recall that bijections $\delta$ and $\delta'$ of ${\mathcal F}_1$ to ${\mathcal F}_2$ are {\it equivalent} 
if there are automorphisms $g_1$ and $g_2$ of ${\mathcal F}_1$  and ${\mathcal F}_2$ (respectively)
such that
$\delta'=g_2\delta g_1$.
The {\it index} ${\rm ind}(\delta)$ of a bijection  $\delta:{\mathcal F}_1\to {\mathcal F}_2$
is the number of lines which go to lines under $\delta$. 
Therefore, $\delta$ is an isomorphism  if and only if the index of $\delta$ is $7$. 

\begin{prop}\label{Fano-map}
Two bijections between ${\mathcal F}_1$ and ${\mathcal F}_2$ are equivalent if and only if 
they are of the same index. 
There are precisely four classes of equivalence and the corresponding index values are $0,1,3,7$.
\end{prop}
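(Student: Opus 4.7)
The plan is to classify via the combinatorial invariant $\mathrm{ind}(\delta) = |P(\delta)|$, where $P(\delta)$ denotes the set of lines of $\mathcal{F}_1$ that $\delta$ carries to lines of $\mathcal{F}_2$. Invariance under equivalence is immediate: for $\delta' = g_2 \delta g_1$, one has $\ell \in P(\delta')$ iff $g_1(\ell) \in P(\delta)$, and $g_1$ permutes lines bijectively, so the two preserved-line sets have the same cardinality.

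The crucial observation is a closure lemma: if two preserved lines $\ell_1, \ell_2$ meet at a point $p$, then the third line of $\mathcal{F}_1$ through $p$ is also preserved. Indeed, $\delta(\ell_1) \cup \delta(\ell_2)$ consists of two lines through $\delta(p)$ covering $5$ of the $7$ points of $\mathcal{F}_2$; the third line through $\delta(p)$ is forced to consist of $\delta(p)$ together with the two omitted points, and these must be the $\delta$-images of the two non-$p$ points of the third line through $p$. Consequently every point of $\mathcal{F}_1$ lies on $0$, $1$, or $3$ preserved lines. Writing $e, s, f$ for the counts of these three point types, we get $\binom{|P|}{2} = 3f$ (counting pairs of preserved lines by their common intersection point), $3|P| = s + 3f$ (counting incidences), and $e + s + f = 7$; these equations admit non-negative integer solutions only for $|P| \in \{0, 1, 3, 7\}$, and in the case $|P| = 3$ the value $f = 1$ forces the three preserved lines to be concurrent.

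It remains to show each admissible index value yields a single equivalence class. The case $|P| = 7$ is the classical transitivity of $\mathrm{Aut}(\mathcal{F}_i)$ on isomorphisms. For $|P| = 3$ the three preserved lines concur at a unique point $p \in \mathcal{F}_1$ with $\delta(p) = p' \in \mathcal{F}_2$; we normalize $(p, p')$ via point-transitivity of $\mathrm{Aut}$, and then normalize the induced bijection of pencils via the action of $\mathrm{Stab}(p)$ on the three lines through $p$. The remaining freedom is a vector $\vec{b} \in \mathbb{F}_2^3$ specifying the bijection on each $2$-element residue $\ell_i \setminus \{p\} \to \ell_i' \setminus \{p'\}$; a direct calculation with the four secants (the lines not through $p$) identifies isomorphisms with the even-weight vectors in $\mathbb{F}_2^3$. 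The pointwise pencil stabilizer has order $4$, acts by translation in the even-weight subgroup, and therefore acts simply transitively on the complementary odd-weight coset consisting of the non-isomorphism $|P| = 3$ bijections, giving one equivalence class.

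The cases $|P| = 1$ and $|P| = 0$ follow by parallel normalizations. For $|P| = 1$ we normalize the unique preserved line pair $(\ell, \ell')$; the residual stabilizer $\mathrm{Stab}(\ell) \times \mathrm{Stab}(\ell') \cong S_4 \times S_4$ acts on the residual bijection of $4$-point complements, identified with $\mathbb{F}_2^2$ so that pair-sums encode points of $\ell$. Each residual bijection induces a permutation $\bar\delta \in S_3$ of the three directions, and the diagonal $S_3$ inside the stabilizer acts on $\bar\delta$ by conjugation; the three conjugacy classes of $S_3$ (identity, transposition, $3$-cycle) correspond respectively to $|P| = 7, 3, 1$, each yielding one orbit. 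For $|P| = 0$ no preserved line is available, so we normalize a point pair $p \to p'$ by point-transitivity and carry out a parallel $\mathrm{Stab}(p) \times \mathrm{Stab}(p')$-orbit analysis; combining with the global count $7! - 168 - 1176 - 2352 = 1344$ and the corresponding stabilizer of order $168^2/1344 = 21$ (a Frobenius group inside $\mathrm{PGL}(3,2)$) confirms a single orbit. The main obstacle throughout the low-index cases is this final orbit-transitivity verification, which in each case is reduced to the conjugation action of a diagonal $S_3$ on the induced direction-permutation $\bar\delta$.
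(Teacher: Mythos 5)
Your overall strategy is genuinely different from the paper's, and much of it is attractive. The paper classifies $\delta$ by whether a fixed simplex is mapped to a simplex, normalizes by automorphisms, and reads off the index from an explicit case analysis of the induced permutation on the complementary $3$-set (Lemma \ref{lemma-biject}) or on a $4$-set $\mathcal{T}$, finishing the index-$0$ case by exhibiting a concrete conjugating automorphism between the two possible $4$-cycles. Your closure lemma (two concurrent preserved lines force the third line of the pencil to be preserved) together with the double count $\binom{|P|}{2}=3f$, $3|P|=s+3f$, $e+s+f=7$ is a clean, a priori derivation of $\mathrm{ind}(\delta)\in\{0,1,3,7\}$ that the paper does not have, and your translation argument on the odd-weight coset of $\mathbb{F}_2^3$ for the index-$3$ class is correct (the pointwise pencil stabilizer is indeed the even-weight subgroup). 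The index-$1$ case is under-detailed --- you must track the pair $(\sigma,\bar\delta)$, where $\sigma=\delta|_\ell$, since the preserved lines through a point $q\in\ell$ are governed by $\sigma^{-1}\bar\delta$ rather than by $\bar\delta$ alone, and you must then use the $V_4\times V_4$ kernels to act transitively on the four lifts of a fixed $\bar\delta$ --- but this can be completed along the lines you indicate.

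The genuine gap is the index-$0$ case. The computation $7!-168-1176-2352=1344$ and $168^2/1344=21$ only tells you what the stabilizer order \emph{would} be \emph{if} the index-$0$ bijections formed a single orbit; it is equally consistent with, say, two orbits of size $672$ with stabilizers of order $42$, so it confirms nothing. Moreover the promised ``parallel'' $\mathrm{Stab}(p)\times\mathrm{Stab}(p')$ analysis cannot actually be parallel to the other cases: with no preserved line there is no complementary $4$-set carrying an induced direction-permutation $\bar\delta$, so the stated reduction ``to the conjugation action of a diagonal $S_3$ on $\bar\delta$'' has no meaning here, and no substitute invariant or normalization is supplied. This is precisely the point where the paper does concrete work: it shows every index-$0$ map is equivalent to one whose restriction to $\mathcal{T}=\{P_{12},P_{13},P_{23},P_{123}\}$ is one of two explicit $4$-cycles $\tau_1,\tau_2$, and then exhibits an explicit automorphism $g$ with $\delta_2=g\delta_1 g$. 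You need either an argument of that concrete kind, or a genuinely worked-out orbit computation for the $192$ index-$0$ bijections fixing a point pair, to close the proof.
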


Without loss of generality, we assume that
${\mathcal F}_1={\mathcal F}_2={\mathcal F}$.
The points of ${\mathcal F}$ are denoted as follows: $P_1,P_2,P_3$ are non-collinear points,
$P_{ij}$ is the third point on the line connecting $P_i$ and $P_j$, $i\ne j$  and $P_{123}$ is the seventh point
(see Fig. \ref{fig:FanoBij}).
\begin{figure}[h!]
\begin{center}
  \includegraphics[scale=0.7]{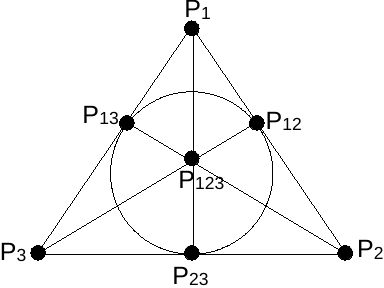}
  \caption{A labelling of the points of ${\mathcal F}$}\label{fig:FanoBij}
\end{center}
\end{figure}

Recall that a {\it simplex} in ${\mathcal F}$ is formed by four points such that any three of them are not on a line.
It is easy to see that a subset of ${\mathcal F}$ is a simplex if and only if its complement is a line.
For example, $\{P_1,P_2,P_3,P_{123}\}$ is a simplex.
For any two simplices $\{X_i\}^{4}_{i=1}$ and $\{Y_i\}^{4}_{i=1}$ there is a unique automorphism $g$
of ${\mathcal F}$ satisfying $g(X_i)=Y_i$ for every $i$.

\begin{lemma}\label{lemma-biject}
If $\delta$ is a bijective transformation of  ${\mathcal F}$ sending a certain simplex to a simplex,
then ${\rm ind}(\delta)\in \{1,3,7\}$.
A bijective transformation $\delta'$ of  ${\mathcal F}$ sending a simplex to a simplex
is equivalent to $\delta$ if and only if $\delta,\delta'$ are of the same index. 
\end{lemma}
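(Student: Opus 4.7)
The plan is to normalize $\delta$ by the equivalence relation and then enumerate a small finite list of normalized forms. The key tool is the sharply transitive action of ${\rm Aut}(\mathcal{F})$ on ordered simplices (recalled just before the lemma): any bijection of the points of one simplex onto those of another extends uniquely to an automorphism of $\mathcal{F}$. Fix the canonical simplex $S_c=\{P_1,P_2,P_3,P_{123}\}$, whose complement is the line $L_7=\{P_{12},P_{13},P_{23}\}$. Given $\delta$ sending some simplex to a simplex, I would choose $g_1,g_2\in{\rm Aut}(\mathcal{F})$ so that $g_2\delta g_1$ carries $S_c$ to $S_c$, then compose on one side with a stabilizer element to make $g_2\delta g_1$ fix $S_c$ pointwise. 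Since ${\rm ind}(\cdot)$ and the equivalence class are invariants under composition with automorphisms on either side, I may assume $\delta$ itself fixes $S_c$ pointwise; then $\delta$ is determined by its permutation of $L_7$, giving six normalized candidates: the identity, three transpositions, and two $3$-cycles.

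For each candidate I would compute the image of the seven lines of $\mathcal{F}$. The identity gives index $7$. For a transposition, say $P_{12}\leftrightarrow P_{13}$, the preserved lines are $L_7$ together with the two lines pointwise fixed by the transposition, namely $\{P_2,P_3,P_{23}\}$ and $\{P_1,P_{23},P_{123}\}$; the remaining four lines each contain exactly one of the swapped side-points and are destroyed, giving index $3$. For a $3$-cycle only $L_7$ survives, since each of the other six lines contains exactly one side-point and cannot be preserved under a non-trivial cyclic shift of the side-points, giving index $1$. This establishes ${\rm ind}(\delta)\in\{1,3,7\}$.

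The index is clearly an invariant of equivalence. Conversely, I would use the copy of $S_3\subset{\rm Aut}(\mathcal{F})$ consisting of the automorphisms permuting $\{P_1,P_2,P_3\}$ and fixing $P_{123}$; each such $\rho$ preserves $S_c$ setwise and permutes $L_7$ accordingly. Setting $g_2=\rho$ and $g_1=\rho^{-1}$ in the equivalence relation replaces a normalized $\delta$ by $\rho\delta\rho^{-1}$, which is again normalized. Under this conjugation action the three transpositions of $L_7$ form a single $S_3$-orbit, and the two $3$-cycles, being mutually inverse, are swapped by any transposition in $S_3$, so same-index normalized bijections are equivalent. The main obstacle is just the bookkeeping in the case analysis of the second step; after normalization the argument is essentially mechanical.
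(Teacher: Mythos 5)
Your proposal is correct and follows essentially the same route as the paper's proof: normalize $\delta$ to fix the simplex $\{P_1,P_2,P_3,P_{123}\}$ pointwise, classify by the induced permutation of the complementary line $\{P_{12},P_{13},P_{23}\}$ (identity, transposition, $3$-cycle giving indices $7,3,1$), and prove equivalence of same-index maps by conjugating with an automorphism extending a permutation of that line. Your line-by-line count of preserved lines and the explicit $S_3$-conjugacy argument are just slightly more detailed versions of what the paper does.
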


\begin{proof}
Let $\delta$ be a bijective transformation of  ${\mathcal F}$ sending a certain simplex to a simplex.
There are automorphisms $g_1,g_2$ such that $g_2\delta g_1$
leaves fixed every point of the simplex ${\mathcal S}=\{P_1,P_2,P_3,P_{123}\}$.
Since $\delta$ and $g_2\delta g_1$ are equivalent, 
we can assume that the restriction of $\delta$ to ${\mathcal S}$ is identity.
Then $\delta$ induces a permutation on the line ${\mathcal F}\setminus {\mathcal S}$ 
formed by $P_{12},P_{13},P_{23}$
and one of the following possibilities is realized:
\begin{enumerate}
\item[$\bullet$] The restriction of $\delta$ to ${\mathcal F}\setminus {\mathcal S}$ is identity
which implies that $\delta$ is identity and, consequently, it is an automorphism. 
\item[$\bullet$] The restriction of $\delta$ to ${\mathcal F}\setminus {\mathcal S}$ is a transposition.
Then $\delta$ is of index $3$ (see Fig. \ref{fig:FanoTrans} -- the dotted lines are preserved).
\begin{figure}[h!]
\begin{center}
  \includegraphics[scale=0.7]{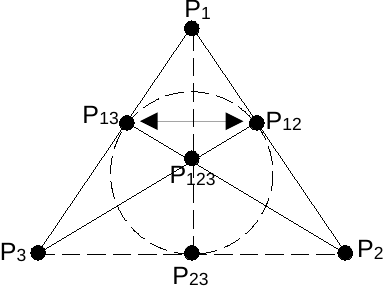} 
  \caption{The case when $\delta$ is of index $3$}\label{fig:FanoTrans}
\end{center}
\end{figure}
\item[$\bullet$] The restriction of $\delta$ to ${\mathcal F}\setminus {\mathcal S}$ is a $3$-cycle. 
Then $\delta$ is of index $1$ (see Fig. \ref{fig:FanoCycle} -- the dotted line is preserved).
\begin{figure}[h!]
\begin{center}
  \includegraphics[scale=0.7]{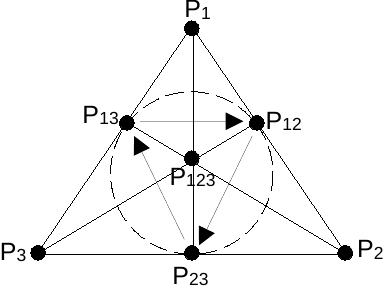}
  \caption{The case when $\delta$ is of index $1$}\label{fig:FanoCycle}
\end{center}
\end{figure}
\end{enumerate}
Suppose that $\delta'$ is a bijective transformation of  ${\mathcal F}$ which sends a  simplex to a simplex
and whose index is equal to the index of $\delta$. 
Without loss of generality, we can assume that $\delta'$ leaves fixed every point of the simplex ${\mathcal S}$.
Then the restriction of $\delta'$ to the line ${\mathcal F}\setminus {\mathcal S}$
is one of the permutations considered above.
Since ${\rm ind}(\delta)={\rm ind}(\delta')$,
there is a permutation $s$ on ${\mathcal F}\setminus {\mathcal S}$ such that 
$s^{-1}\delta s$ coincides with the restriction of $\delta'$ to ${\mathcal F}\setminus {\mathcal S}$.
We extend $s$ to an automorphism $g$ of ${\mathcal F}$ and 
obtain that $\delta'=g^{-1}\delta g$.
\end{proof}

\begin{proof}[Proof of Proposition \ref{Fano-map}]
Let $\delta$ be a bijective transformation of ${\mathcal F}$.
As above, we denote by ${\mathcal S}$ the simplex formed by $P_1,P_2,P_3,P_{123}$.
Suppose $\delta({\mathcal S})$ is not a simplex. Then $\delta({\mathcal S})$ consists of a line and a point not on this line.
Recall that any bijection between simplices can be extended to an automorphism of ${\mathcal F}$.
Also, for any lines $L_1,L_2$ in ${\mathcal F}$ and any points $P_1,P_2$ such that $P_i\not\in L_i$
every bijection of $L_1$ to $L_2$ can be extended to an automorphism of ${\mathcal F}$ transferring $P_1$ to $P_2$.
Since $\delta$ can be replaced by $g_1\delta g_2$ for any pair of automorphisms $g_1,g_2$,
we  can assume that 
$$\delta({\mathcal S})=\{P_1,P_2,P_3, P_{23}\},$$
$\delta(P_i)=P_i$ for every $i$ and $\delta(P_{123})=P_{23}$.
Then $\delta$ induces a non-trivial permutation on the set
$${\mathcal T}={\mathcal F}\setminus\{P_1,P_2,P_3\}=\{P_{12},P_{13},P_{23},P_{123}\}.$$
One of the following possibilities is realized:
\begin{enumerate}
\item[(1)] the  restriction of $\delta$ to ${\mathcal T}$ is the transposition $P_{123},P_{23}$;
\item[(2)] the  restriction of $\delta$ to ${\mathcal T}$ is the sum of the transpositions $P_{123},P_{23}$ and $P_{12},P_{13}$;
\item[(3)] the  restriction of $\delta$ to ${\mathcal T}$ is the $3$-cycle $P_{123},P_{23},P_{1i}$ with $i\in \{2,3\}$;
\item[(4)] the  restriction of $\delta$ to ${\mathcal T}$ is a $4$-cycle. 
\end{enumerate}
In the cases (1) and (2), $\delta$ sends the simplex $\{P_2,P_3,P_{12},P_{13}\}$ to itself
(it leaves fixed all points of this simplex in the first case and transposes two points in the second) and we apply Lemma \ref{lemma-biject}.
In the case (3),
$\delta$ sends the simplex $\{P_i,P_{1i},P_{23},P_{123}\}$ to itself and we use Lemma \ref{lemma-biject} again. 

Consider the case (4). A direct verification shows that the index of $\delta$ is zero.
The  restriction of $\delta$ to ${\mathcal T}$ is one of the following $4$-cycles:
$$\tau_1=P_{123},P_{23},P_{12},P_{13}\;\mbox{ or }\tau_2=P_{123},P_{23},P_{13},P_{12}.$$
Denote by $\delta_i$, $i\in \{1,2\}$ the transformation of ${\mathcal F}$ which leaves $P_1,P_2,P_3$ fixed
and whose restriction to ${\mathcal T}$ is $\tau_i$. Then $\delta$ is one of $\delta_i$.
The arguments from the beginning of our proof show that every bijective transformation of ${\mathcal F}$ with index $0$
is equivalent to one of $\delta_i$.
It is easy to see that $\delta_2=g\delta_1 g$, where $g$ is the automorphism of ${\mathcal F}$ which transposes
the pair $P_2,P_3$, the pair $P_{12},P_{13}$ and leaves fixed all remaining points.
So, $\delta_1$ and $\delta_2$ are equivalent.
\end{proof}

\subsection{Proof of Theorem \ref{centered4-15}}
Let ${\mathcal C}$ be a centered maximal $15$-element clique of the collinearity graph of ${\mathcal P}_4(15)$
and let $O$ be a center point of ${\mathcal C}$.
Then
${\mathcal C}={\mathcal F}_1\#_{\delta} {\mathcal F}_2$,
where ${\mathcal F}_1,{\mathcal F}_2$ are Fano planes whose points are 
$4$-element subsets of $O^c$ and a certain $7$-element subset of $O$ (respectively) and $\delta:{\mathcal F}_1\to{\mathcal F}_2$
is a bijection.
Denote by ${\mathcal C}_+$ and ${\mathcal C}_-$ the set of all $X\cup\delta(X)$ and, respectively,  $X\cup(O\setminus\delta(X))$ with $X\in {\mathcal F}_1$. 
Each of these sets contains precisely $7$ elements. 
For every $C\in {\mathcal C}_+$ we have $O\triangle C\in {\mathcal C}_-$.

\begin{lemma}\label{plane}
If ${\mathcal C}$ contains a certain line $L$ of ${\mathcal P}_4(15)$ which does not pass through $O$, then the plane spanned by $L$ and $O$
is contained in ${\mathcal C}$. This plane contains a line formed by points of ${\mathcal C}_+$.
\end{lemma}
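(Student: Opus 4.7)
The plan is to describe the projective plane $\Pi$ spanned by $L$ and $O$ explicitly and then read off both assertions by tracking how points of ${\mathcal C}$ decompose relative to $O$. Writing $L=\{A,B,C\}$ with $A\triangle B=C$, the seven points of $\Pi$ are $O$, $A$, $B$, $C$, $O\triangle A$, $O\triangle B$, $O\triangle C$. Since $O$ is a center point and $A,B,C\in {\mathcal C}\setminus\{O\}$, each of $O\triangle A$, $O\triangle B$, $O\triangle C$ also lies in ${\mathcal C}$, so $\Pi\subset {\mathcal C}$, settling the first claim.

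For the second claim I assign to each $P\in {\mathcal C}\setminus\{O\}$ a sign $s_P\in\{0,1\}$, setting $s_P=0$ when $P\in{\mathcal C}_+$ and $s_P=1$ when $P\in{\mathcal C}_-$. Decompose $P=X_P\cup Y_P$ with $X_P\subset O^c$ and $Y_P\subset O$; then $Y_P=\delta(X_P)$ if $s_P=0$ and $Y_P=O\triangle\delta(X_P)$ if $s_P=1$, while the flip $P\mapsto O\triangle P$ preserves $X_P$ and reverses $s_P$. The line identity $A\triangle B\triangle C=\emptyset$ splits into $X_A\triangle X_B\triangle X_C=\emptyset$ (so $X_A,X_B,X_C$ is a line of ${\mathcal F}_1$) and $Y_A\triangle Y_B\triangle Y_C=\emptyset$.

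The only real obstacle is forcing the parity $s_A+s_B+s_C$ to be even. Substituting the formulas for $Y_P$ into the second identity yields
\[
\delta(X_A)\triangle\delta(X_B)\triangle\delta(X_C)\;=\;\begin{cases}\emptyset & \text{if }s_A+s_B+s_C\text{ is even,}\\ O & \text{if }s_A+s_B+s_C\text{ is odd.}\end{cases}
\]
If the parity were odd, the right-hand side would be $O$, of cardinality $|O|=8$; but each $\delta(X_P)$ lies in the common $7$-element subset $Z\subset O$ on which ${\mathcal F}_2$ is supported (Proposition \ref{prop2}), so the left-hand side is contained in $Z$ and has at most $7$ elements, a contradiction. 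Hence the parity is even.

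To finish, I check the possible sign patterns on $L$. If it is $(+,+,+)$, then $L$ itself is a line of $\Pi$ lying in ${\mathcal C}_+$. Otherwise it contains exactly two minuses; for example, in the pattern $(+,-,-)$ the points $O\triangle B$ and $O\triangle C$ lie in ${\mathcal C}_+$, and the triple $\{A, O\triangle B, O\triangle C\}$ is one of the seven lines of $\Pi$ (since $A\triangle(O\triangle B)\triangle(O\triangle C)=A\triangle B\triangle C=\emptyset$) and sits entirely in ${\mathcal C}_+$. The remaining two patterns are handled identically by symmetry.
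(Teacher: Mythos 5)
Your proof is correct. The first claim is handled exactly as in the paper (the plane is $\{O,A,B,C,O\triangle A,O\triangle B,O\triangle C\}$ and the centrality of $O$ supplies the three flipped points). For the second claim, both arguments rest on the same structural fact---that the points of ${\mathcal F}_2$ are supported on a $7$-element subset $Z\subsetneq O$, so the $\delta$-images cannot symmetric-difference to all of $O$---but they package it differently. The paper extracts from this only the weaker statement that ${\mathcal C}_-$ contains no line; it then picks points $C_1,C_2\in{\mathcal C}_+$ on two distinct lines of the plane through $O$ and applies that statement to the line $\{O\triangle C_1,O\triangle C_2,C_1\triangle C_2\}$ to conclude $C_1\triangle C_2\in{\mathcal C}_+$. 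You instead prove the stronger parity statement that every line of ${\mathcal C}$ avoiding $O$ meets ${\mathcal C}_-$ in an even number of points, and then read the conclusion off the sign pattern of $L$ itself, flipping the two negative points when needed. Your route never leaves the given line $L$ and its flips, and the parity lemma is a reusable strengthening (it subsumes the paper's ``no line in ${\mathcal C}_-$'' as the all-minus case); the paper's route avoids the case analysis. Both are complete and correct.
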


\begin{proof}
The first statement is obvious.
To prove the second statement we need the following observation: there is no line of ${\mathcal P}_4(15)$ contained in ${\mathcal C}_-$.

Indeed, the points of ${\mathcal F}_2$ are $4$-element subsets of a certain $7$-element subset $Z\subset O$
and, consequently, for every $Y\in {\mathcal F}_2$ the complement $O\setminus Y$ contains the unique $i\in O\setminus Z$
which implies that every element of ${\mathcal C}_-$ contains $i$.
This gives the claim, since for any three points $A,B,C$ on a line of ${\mathcal P}_4(15)$ the intersection $A\cap B\cap C$ is empty.

Now, consider two distinct lines $L_1,L_2$ in the plane spanned by $L$ and $O$
which pass through $O$. The line $L_i$ contains a point $C_i\in {\mathcal C}_+$.
Then $O\triangle C_1, O\triangle C_2$ belong to ${\mathcal C}_-$
and the third point on the line joining them belongs to ${\mathcal C}_+$
(it was noted above that ${\mathcal C}_-$ does not contain a line).
This point is also the third point on the line joining $C_1,C_2$.
\end{proof}

Let $X_1,X_2\in {\mathcal X}$. Then the line joining $X_1\cup \delta(X_1)$ and $X_2\cup \delta(X_2)$ is contained in ${\mathcal C}_{+}$ if and only if 
$$\delta(X_1\triangle X_2)=\delta(X_1)\triangle \delta(X_2),$$
in other words, $\delta$ transfers the line joining $X_1,X_2$ in ${\mathcal F}_1$ to a certain line of ${\mathcal F}_2$.
Therefore, ${\mathcal C}_+$ contains precisely ${\rm ind}(\delta)$ lines.
By Proposition \ref{Fano-map}, ${\rm ind}(\delta)\in \{0,1,3,7\}$.

If the index is $7$, then ${\mathcal C}_+$ is a plane and ${\mathcal C}$ is the maximal singular subspace of ${\mathcal P}_4(15)$ spanned by 
this plane and $O$.

In the case when the index  is $3$, three lines of ${\mathcal F}_1$
whose images are lines of ${\mathcal F}_2$ have a common point (see Fig.\ref{fig:FanoTrans}  in the proof of Lemma \ref{lemma-biject}).
Then ${\mathcal C}_+$ is the union of three lines passing through a point which implies that 
${\mathcal C}$ is the union of three planes with a common line. 
Lemma \ref{plane} guarantees that every line contained in ${\mathcal C}$ is on one of these planes.

If the index is $1$ or $0$, then ${\mathcal C}_+$  contains a unique line or does not contain a line, respectively.
Lemma \ref{plane}  shows that the possibility (C3) or, respectively, (C4) is realized.

The second part of Theorem \ref{centered4-15} is a direct consequence of Lemma \ref{iso}.

\section{Non-centered maximal cliques. The case $m=4$ and $n=15$}
In this section, we construct a maximal $15$-element clique of the collinearity graph of ${\mathcal P}_4(15)$ which is not centered.

\begin{theorem}\label{non-center}
There is a non-centered maximal $15$-element clique in the collinearity graph of ${\mathcal P}_4(15)$
which is the union of a maximal singular subspace of ${\mathcal P}_4(15)$ with a plane deleted 
and a plane disjoint with this subspace. 
\end{theorem}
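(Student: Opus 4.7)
My plan is to exhibit $\pi_2$ explicitly. Identify $[15]$ with the nonzero vectors of $V=\mathbb{F}_2^4$, take $\mathcal{S}$ to consist of the hyperplane complements $B_\phi=\{v\ne 0:\phi(v)=1\}$ for nonzero linear functionals $\phi$ on $V$, fix $p\in V\setminus\{0\}$, and set $\pi_1=\{B_\phi:\phi(p)=0\}$, which is a plane of $\mathcal{S}$. The eight blocks of $\mathcal{S}\setminus\pi_1$ are exactly those containing $p$, and I need a plane $\pi_2$ of $\mathcal{P}_4(15)$, disjoint from $\mathcal{S}$, whose elements are collinear to every block of $\mathcal{S}\setminus\pi_1$.

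The first key step is a double count classifying the candidates for $E\in\pi_2$. For the eight $\phi$ with $\phi(p)=1$, equating $\sum_\phi|E\cap B_\phi|=8\cdot 4=32$ with its direct evaluation $4|E\setminus\{p\}|+8[p\in E]$ forces $p\notin E$; equating $\sum_\phi|E\cap B_\phi|^2=128$ with $144-4k$, where $k$ counts the pairs $\{v,v+p\}$ contained in $E$, forces $k=4$. Hence $E$ must be a disjoint union of four of the seven pairs of the involution $v\mapsto v+p$ on $[15]\setminus\{p\}$; conversely every such union satisfies the eight collinearity conditions (each $B_\phi$ with $\phi(p)=1$ meets every pair in exactly one point), giving $\binom{7}{4}=35$ candidates.

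These seven pairs, being the nonzero vectors of the quotient $V/\langle p\rangle\cong\mathbb{F}_2^3$, naturally form a Fano plane $\mathcal{F}_1$; each candidate $E$ is determined by a $4$-subset of $\mathcal{F}_1$, and the seven elements of $\pi_1$ correspond precisely to the \emph{quadrangles} (complements of lines) of $\mathcal{F}_1$. A plane $\pi_2$ with the required properties corresponds to a $3$-dimensional $\mathbb{F}_2$-subspace of $\mathbb{F}_2^{\mathcal{F}_1}$ all of whose seven nonzero vectors are $4$-subsets, i.e., a $[7,3,4]$ simplex code, necessarily the quadrangle set of \emph{some} Fano structure $\mathcal{F}_2$ on the same seven-point set. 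Choosing $\mathcal{F}_2$ to share no line with $\mathcal{F}_1$ (for instance the cyclic Fano plane with lines $\{i,i{+}1,i{+}3\}\pmod 7$ against a standard labelling of $\mathcal{F}_1$) ensures that each quadrangle of $\mathcal{F}_2$ is a non-quadrangle of $\mathcal{F}_1$, hence lies outside $\pi_1$; since no such $E$ contains $p$, one also has $\pi_2\cap(\mathcal{S}\setminus\pi_1)=\emptyset$, so $\pi_2\cap\mathcal{S}=\emptyset$.

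It remains to verify that $\mathcal{C}=(\mathcal{S}\setminus\pi_1)\cup\pi_2$ is a non-centered maximal clique. It has $15$ elements and is a clique by construction; maximality follows from the Ryser bound recalled in Section~2. Let $\langle\pi_2\rangle$ be the linear span of $\pi_2$ in $\mathbb{F}_2^{15}$; since $\pi_2\cap\mathcal{S}=\emptyset$ and both are linear subspaces minus zero, $\mathcal{S}\cap\langle\pi_2\rangle=\{0\}$. For $O\in\pi_2$ and $C\in\mathcal{S}\setminus\pi_1$, the third point $O\triangle C$ lying in $\mathcal{S}$ would force $O\in\mathcal{S}$, and lying in $\langle\pi_2\rangle$ would force $C\in\langle\pi_2\rangle\cap\mathcal{S}=\{0\}$; both are impossible, so $O\triangle C\notin\mathcal{C}$ and $O$ is not a centre point. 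The symmetric argument handles $O\in\mathcal{S}\setminus\pi_1$. The main obstacle will be the moment characterization of the candidates for $E$; once they are identified with $4$-subsets of a Fano plane, the $[7,3,4]$-code classification reduces the construction of $\pi_2$ to exhibiting a second Fano structure sharing no line with the first.
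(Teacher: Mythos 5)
Your argument is correct, but it reaches the theorem by a genuinely different route. The paper's proof is purely exhibitive: working on the set $[-7]\cup\{0\}\cup[7]$, it writes down a plane ${\mathcal F}$ whose points are unions of four pairs $\{i,-i\}$, then lists eight further blocks $Y,N_{ij},M_{ijt}$ and verifies all the required intersection sizes by hand, checks that the pairwise symmetric differences of these eight blocks form a plane ${\mathcal F}'$, and concludes non-centeredness from the observation that the clique contains no line through any of the eight points of ${\mathcal S}\setminus{\mathcal F}'$. You instead start from ${\mathcal S}$ and the deleted plane $\pi_1$ and \emph{derive} the shape of any completing plane: the first- and second-moment counts correctly force every admissible block $E$ to avoid $p$ and to be a union of four fibres of the involution $v\mapsto v+p$, and identifying planes of such blocks with $[7,3,4]$ simplex codes, hence with Fano structures on the seven fibres, reduces the construction to producing two Fano structures on a $7$-set with no common line. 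This buys a classification (there are exactly $35$ candidates for $E$, and every completing plane comes from a line-disjoint Fano structure) and a cleaner non-centeredness argument via the trivial intersection of ${\mathcal S}\cup\{0\}$ with the span of $\pi_2$; it costs a reliance on the characterization of three-dimensional length-$7$ codes with all weights $4$ as duals of Hamming codes (Bonisoli's theorem, already cited in the paper, or a short $2$-$(7,3,1)$ count). Two small points should be firmed up: the existence of a Fano structure sharing no line with ${\mathcal F}_1$ is exactly the existence of an index-$0$ bijection, so you can cite Proposition \ref{Fano-map} instead of gesturing at a ``standard labelling''; and if you do want the explicit example, you should actually record the one-line check that the plane with lines $\{1,2,3\},\{1,4,5\},\{1,6,7\},\{2,4,6\},\{2,5,7\},\{3,4,7\},\{3,5,6\}$ and the cyclic plane with lines $\{i,i+1,i+3\}\pmod 7$ share no line (they do).
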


\begin{proof}
For every integer $t>0$ we define
$$[t]=\{1,\dots, t\}\;\mbox{ and }\;[-t]=\{-1,\dots, -t\}$$
and set
$$\{\pm i_1,\dots,\pm i_t\}=\{i_1,-i_1,\dots, i_t,-i_t\}$$
for any integers  $i_1,\dots,i_t >0$.
We consider the $15$-element set $[-7]\cup\{0\}\cup [7]$ instead of $[15]$.
Then
$$X_1=\{\pm 1,\pm 2, \pm 3,\pm 4\},$$
$$X_2=\{\pm 1,\pm 2,\pm 5,\pm 6\},$$
$$X_3=\{\pm 3,\pm 4,\pm 5,\pm 6\}$$
form a line of ${\mathcal P}_4(15)$
and 
$$X=\{\pm 1,\pm 3,\pm 5,\pm 7\}$$
is collinear to each $X_i$.
These  four points span a plane ${\mathcal F}$ contained in ${\mathcal P}_4(15)$ and 
the remaining three points of ${\mathcal F}$ are 
$$X\triangle X_1= \{\pm 2,\pm 4,\pm 5,\pm 7\},$$
$$X\triangle X_2= \{\pm 2,\pm 3,\pm 6, \pm 7\},$$
$$X\triangle X_3= \{\pm 1,\pm 4,\pm 6,\pm 7\}.$$
If $Y\in {\mathcal P}_4(15)$ contains $0$ and precisely one of $i,-i$ for every $i\in [7]$,
then it is collinear to all points of ${\mathcal F}$.
We take $Y=\{0\}\cup [7]$.

For any distinct $i,j\in [6]$ the point 
$$N_{ij}=\{0,i,j,7\}\cup ([-6]\setminus\{-i,-j\})$$
is  collinear to $Y$ and all points of ${\mathcal F}$.
The points $N_{ij}$ and $N_{i'j'}$ are collinear if and only if 
$$
\{i,j\}\cap\{i',j'\}=\emptyset.
$$
Indeed, 
$$N_{ij}\cap N_{i'j'}=\{0,7\}\cup (\{i,j\}\cap\{i',j'\})\cup ([-6]\setminus(\{-i,-j\}\cup \{-i',-j'\}))$$
contains precisely $4$ elements if and only if the above condition holds.

For any mutually distinct  $i,j,t\in [6]$ the point 
$$M_{ijt}=\{-7,0,i,j,t\}\cup([-6]\setminus\{-i,-j,-t\}).$$
is collinear to $Y$ and all points of ${\mathcal F}$.
The points $M_{ijt}$ and $M_{i'j't'}$ are collinear if and only if 
$$|\{i,j,t\}\cap\{i',j',t'\}|=1.$$
Indeed, $$M_{ijt}\cap M_{i'j't'}=\{-7,0\}\cup (\{i,j,t\}\cap\{i',j',t'\})\cup ([-6]\setminus(\{-i,-j,-t\}\cup \{-i',-j',-t'\}))$$
contains precisely $4$ elements if and only if the latter condition holds.
Similarly,
$N_{ij}$ and $M_{tsl}$ are collinear if and only if 
$$|\{i,j\}\cap\{t,s,l\}|=1,$$
since
$$N_{ij}\cap M_{tsl}=\{0\}\cup (\{i,j\}\cap\{t,s,l\})\cup ([-6]\setminus(\{-i,-j\}\cup\{-t,-s,-l\}))$$
contains precisely $4$ elements if and only if the latter condition is satisfied.

The points
\begin{equation}\label{eq-7points}
N_{13},\;N_{25},\;N_{46}\;\mbox{ and }\;M_{124},\;M_{156},\;M_{236},\;M_{345}
\end{equation}
are mutually collinear; see Fig. \ref{fig:non-conic-points}, where the solid lines correspond to the four points $M_{tsl}$ and the dotted lines correspond to the three points $N_{ij}$. 
\begin{figure}[h!]
\begin{center}
  \includegraphics[scale=0.7]{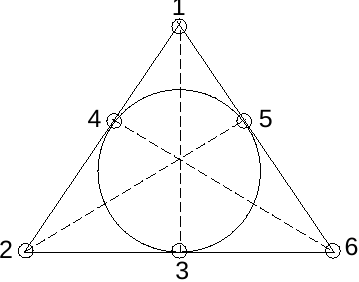}
  \caption{Schematic diagram for non-coplanar points in non-centered maximal clique}\label{fig:non-conic-points}
\end{center}
\end{figure}

For the points  \eqref{eq-7points} we have 
$$Y\triangle N_{ij}=([-6]\setminus\{-i,-j\})\cup ([6]\setminus\{i,j\}),$$
$$Y\triangle M_{ijt}=([-7]\setminus\{-i,-j,-t\})\cup ([7]\setminus\{i,j,t\}),$$
$$N_{ij}\triangle N_{i'j'}=\{\pm i,\pm j, \pm i', \pm j'\},$$
$$M_{ijt}\triangle M_{ij't'}=\{\pm j,\pm t, \pm j', \pm t'\},$$
$$N_{ij}\triangle M_{its}=\{\pm j,\pm t, \pm s, \pm 7\}.$$
More precisely,
$$Y\triangle N_{13}=N_{25}\triangle N_{46}=M_{124}\triangle M_{156}=M_{236}\triangle M_{345}=\{\pm 2,\pm4, \pm 5, \pm 6\},$$
$$Y\triangle N_{25}=N_{13}\triangle N_{46}=M_{124}\triangle M_{236}=M_{156}\triangle M_{345}=\{\pm 1, \pm3,\pm 4,\pm 6\},$$
$$Y\triangle N_{46}=N_{13}\triangle N_{25}=M_{124}\triangle M_{345}=M_{156}\triangle M_{236}=\{\pm 1,\pm2,\pm 3,\pm 5\},$$
$$Y\triangle M_{124}=N_{13}\triangle M_{156}=N_{25}\triangle M_{236}=N_{46}\triangle M_{345}=\{\pm 3,\pm5, \pm 6, \pm 7\},$$
$$Y\triangle M_{156}=N_{13}\triangle M_{124}=N_{25}\triangle M_{345}=N_{46}\triangle M_{236}=\{\pm 2,\pm3, \pm 4, \pm 7\},$$
$$Y\triangle M_{236}=N_{13}\triangle M_{345}=N_{25}\triangle M_{124}=N_{46}\triangle M_{156}=\{\pm 1,\pm4, \pm 5, \pm 7\},$$
$$Y\triangle M_{345}=N_{13}\triangle M_{236}=N_{25}\triangle M_{156}=N_{46}\triangle M_{124}=\{\pm 1,\pm2, \pm 6, \pm 7\}.$$
These seven points form a plane ${\mathcal F}'$
(the first three points are on a line and the lines joining the fourth point with the first, second and third point 
contain the fifth, sixth and seventh point, respectively).
Denote by ${\mathcal S}$  the maximal singular subspace  of ${\mathcal P}_4(15)$ spanned by ${\mathcal F}'$ and $Y$.
Then ${\mathcal S}\setminus {\mathcal F}'$ is formed by $Y$ and the points \eqref{eq-7points}.
The clique $({\mathcal S}\setminus {\mathcal F}')\cup{\mathcal F}$ contains $15$ elements and, consequently, is maximal.
This clique is not centered, since it does not contain lines passing through points of ${\mathcal S}\setminus {\mathcal F}'$.
\end{proof}

\section{Remark on the automorphism group action}
By Proposition \ref{prop0}, a clique of type (C1) (i.e. a maximal singular subspace of ${\mathcal P}_4(15)$) corresponds
to the symmetric $(15,8,4)$-design whose points are points of ${\rm PG}(3,2)$ and blocks are hyperplane complements.

\begin{prop}[Proposition 1.5 in \cite{PZ}]\label{PZ}
The design of points and hyperplane complements of ${\rm PG}(3,2)$ is the unique symmetric $(15,8,4)$-design admitting a flag-transitive, point-imprimitive subgroup of automorphisms.
\end{prop}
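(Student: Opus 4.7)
The plan is to leverage the classification given by Theorems \ref{centered4-15} and \ref{non-center}: together they exhibit five pairwise non-isomorphic symmetric $(15,8,4)$-designs, corresponding to maximal $15$-cliques of types (C1)--(C4) and the non-centered type of Theorem \ref{non-center}. By Proposition \ref{prop0}, type (C1) is exactly the design of points and hyperplane complements of ${\rm PG}(3,2)$. It therefore suffices to show that among these five designs only (C1) admits a flag-transitive, point-imprimitive group of automorphisms. Throughout I use that for a symmetric design flag-transitivity implies block-transitivity.

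The first step disposes of the remaining centered types. The property of a block $O$ being a \emph{center block} is intrinsic to the design: it is equivalent to $O\triangle C$ being a block for every other block $C$. Hence the set of center blocks is preserved by every design automorphism. In types (C3) and (C4) the unique center block is a fixed point for the automorphism group, so block-transitivity fails. In type (C2) the three center blocks form an invariant subset of cardinality $3\ne 15$, ruling out block-transitivity once again.

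The non-centered design is more delicate because no block is a center and the previous invariant is void. I would replace it by the intrinsic count $\ell(B)$, the number of unordered pairs of blocks $\{B_1,B_2\}$ distinct from $B$ with $B=B_1\triangle B_2$ (equivalently, the number of lines of ${\mathcal P}_4(15)$ through $B$ contained in the design). A direct check based on the explicit construction in the proof of Theorem \ref{non-center} yields $\ell(B)=3$ for the seven blocks of the Fano plane ${\mathcal F}$ (since ${\mathcal F}$ is a $2$-dimensional singular subspace, hence closed under $\triangle$), and $\ell(B)=0$ for the eight blocks of ${\mathcal S}\setminus{\mathcal F}'$ (since ${\mathcal F}'$ is a hyperplane of the $4$-dimensional subspace ${\mathcal S}$, so the symmetric difference of any two blocks of ${\mathcal S}\setminus{\mathcal F}'$ lies in ${\mathcal F}'$, which is disjoint from the clique). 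So the $7+8$ partition of the blocks is intrinsic and block-transitivity fails.

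It remains to realize a flag-transitive, point-imprimitive subgroup of automorphisms of the (C1) design. For this I would take a spread $\Sigma=\{L_1,\dots,L_5\}$ of ${\rm PG}(3,2)$, i.e.\ a partition of the $15$ points into $5$ pairwise disjoint lines (such spreads exist). Its stabilizer $G_\Sigma\le{\rm GL}(4,2)$ permutes the five lines of $\Sigma$ transitively and acts transitively on the three points of each spread line, so $G_\Sigma$ is point-transitive with $\Sigma$ as a non-trivial system of imprimitivity. Using the fact that each hyperplane of ${\rm PG}(3,2)$ contains exactly one line of $\Sigma$, a brief verification shows that $G_\Sigma$ is transitive on the $15$ hyperplane complements and on the $120$ incident point-block pairs of the (C1) design. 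The main obstacle is the treatment of the non-centered design: there the natural center-block invariant collapses, and one must find the right intrinsic replacement (the line-count $\ell$) and carry out the short calculation that separates its two block-orbits.
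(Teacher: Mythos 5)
Your argument is correct and, for the part of the statement that the paper actually proves itself, it coincides in substance with the paper's Lemma \ref{lemma-sim}: the paper likewise rules out block-transitivity for the designs of types (C2)--(C4) by noting that the set of center blocks is an automorphism-invariant proper subset of the block set, and for the non-centered design by noting that no line of ${\mathcal P}_4(15)$ contained in the clique passes through a block of ${\mathcal S}\setminus{\mathcal F}'$, while every block of the plane ${\mathcal F}$ lies on three such lines --- exactly your invariant $\ell$. (The paper phrases the invariance by saying that every design automorphism is a permutation of $[15]$ and hence an automorphism of ${\mathcal P}_4(15)$; your ``intrinsic'' phrasing via symmetric differences of blocks amounts to the same thing, since a simple design's automorphisms are determined by the point permutation.) The one genuine difference is the positive direction: the paper imports it wholesale from \cite[Lemma 4.2]{PZ}, whereas you sketch an explicit flag-transitive, point-imprimitive group as the stabilizer of a line spread of ${\rm PG}(3,2)$ --- a copy of $\Gamma{\rm L}(2,4)$ of order $360$ acting on the nonzero vectors of ${\mathbb F}_4^2$. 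That construction is sound (each hyperplane contains exactly one spread line, the spread stabilizer is transitive on the $15$ hyperplanes, and the stabilizer of a hyperplane acts transitively on the $8$ points of its complement), and it is essentially the example underlying the cited lemma, so on this point your proof is more self-contained than the paper's; the ``brief verification'' you defer is real work and should be written out. Finally, note that both your proof and the paper's rely implicitly on the external fact that the five exhibited designs exhaust the symmetric $(15,8,4)$-designs up to isomorphism (the paper invokes this only in its introduction, via \cite[Table 1.23]{CD}); you should state that dependence explicitly, but it is not a gap relative to the paper.
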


We prove the following.

\begin{lemma}\label{lemma-sim}
If a symmetric $(15,8,4)$-design corresponds to a clique of type {\rm (C2)-(C4)} or 
the non-centered clique from Theorem \ref{non-center}, then the automorphism group of this design
acts non-transitively on the set of blocks.
\end{lemma}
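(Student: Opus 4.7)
The plan is to exhibit, in each of the four cases, a proper nonempty subset $\mathcal{A}\subsetneq\mathcal{C}$ that is preserved by every automorphism of the associated $(15,8,4)$-design. An automorphism of the design is, by definition, a permutation of $[15]$ mapping the block set $\mathcal{C}$ to itself. Every permutation of $[15]$ preserves the symmetric difference operation and therefore induces an automorphism of ${\mathcal P}_4(15)$; in particular it preserves collinearity of $2m$-subsets. Consequently any subset of $\mathcal{C}$ singled out by a purely line-theoretic property (expressed in terms of which lines of ${\mathcal P}_4(15)$ are contained in $\mathcal{C}$) is automatically invariant under the automorphism group, and the existence of a proper nonempty such subset prevents transitivity on blocks.

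For the centered cases (C2)--(C4) I would take $\mathcal{A}$ to be the set of center points of $\mathcal{C}$. This set is invariant because ``$O$ is a center point'' is defined in terms of which lines through $O$ are contained in $\mathcal{C}$, which is preserved by any line-preserving bijection of $\mathcal{C}$ to itself. From the description given by Theorem \ref{centered4-15}, $|\mathcal{A}|$ equals $3$, $1$, $1$ in the cases (C2), (C3), (C4) respectively, so in each case $\mathcal{A}$ is a proper nonempty subset of the $15$-element set $\mathcal{C}$, and non-transitivity follows.

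For the non-centered clique $\mathcal{C}=({\mathcal S}\setminus{\mathcal F}')\cup{\mathcal F}$ of Theorem \ref{non-center}, I would take $\mathcal{A}=\mathcal{F}$ and characterize it invariantly as the set of blocks in $\mathcal{C}$ lying on at least one line of ${\mathcal P}_4(15)$ that is fully contained in $\mathcal{C}$. The key step is to show that the only lines of ${\mathcal P}_4(15)$ contained in $\mathcal{C}$ are the seven lines of the Fano plane ${\mathcal F}$. The argument splits by the distribution of the three points of a putative line among the two disjoint singular subspaces ${\mathcal F}$ and ${\mathcal S}$: any two points of ${\mathcal F}$ (respectively of ${\mathcal S}$) span a line wholly inside ${\mathcal F}$ (respectively ${\mathcal S}$), and this combined with the disjointness ${\mathcal F}\cap{\mathcal S}=\emptyset$ eliminates every mixed configuration of $1$--$2$ or $2$--$1$ split. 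The only remaining possibility is a line entirely inside ${\mathcal S}\setminus{\mathcal F}'$, and this is ruled out by the elementary projective fact that in ${\rm PG}(3,2)$ every line meets every plane in at least one point, so no line of ${\mathcal S}$ can avoid ${\mathcal F}'$. Hence $\mathcal{F}$ is precisely the set of blocks of $\mathcal{C}$ lying on some line of $\mathcal{C}$; it is a $7$-element invariant subset of the $15$-element set $\mathcal{C}$.

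The main obstacle I anticipate is the non-centered case, since there is no center point to furnish the invariant directly. One must make careful use of the disjointness of ${\mathcal F}$ and ${\mathcal S}$ (guaranteed by the construction in Theorem \ref{non-center}) together with the line-meets-plane property of ${\rm PG}(3,2)$; once these inputs are in hand the enumeration of lines in $\mathcal{C}$ is routine. The centered cases are essentially immediate from the classification already established.
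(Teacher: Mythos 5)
Your proposal is correct and follows essentially the same route as the paper: both arguments rest on the observation that every permutation of $[15]$ induces an automorphism of ${\mathcal P}_4(15)$, so any line-theoretic invariant (the set of center points for (C2)--(C4), the set of blocks lying on a line of the clique for the non-centered case) yields a proper nonempty block subset fixed by the automorphism group. Your case analysis showing that the only lines contained in the non-centered clique are the seven lines of ${\mathcal F}$ merely fills in a detail the paper asserts at the end of its proof of Theorem \ref{non-center} (``it does not contain lines passing through points of ${\mathcal S}\setminus{\mathcal F}'$'').
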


\begin{proof}
Every permutation on the set $[15]$ is an automorphism of the geometry ${\mathcal P}_4(15)$. 
So, a permutation preserving a clique of type (C2)-(C4) and sending a center point to a non-center point does not exist. 
From the same reason,  
there is no permutation preserving the non-centered clique from Theorem \ref{non-center} which sends points of
the maximal singular subspace with a plane deleted to points of the disjoint plane.
Therefore, the automorphism group of a design acts non-transitively on the set of blocks if
the blocks form a clique of one of these types.
\end{proof}

Lemma \ref{lemma-sim} together with \cite[Lemma 4.2]{PZ} (which states that the design of points and hyperplane complements of ${\rm PG}(3,2)$
admits a flag-transitive, point-imprimitive subgroup of automorphisms) imply Proposition \ref{PZ}.

\section{Figures}
All five types of maximal $15$-element cliques of the collinearity graph of ${\mathcal P}_4(15)$  together with the incidence matrices 
obtained from the corresponding normalized Hadamard matrices (see Remark~\ref{remH}) are presented on Fig.~\ref{fig:clique1} -- \ref{fig:non-centered}. Lines that are entirely contained in our cliques are pictured (except the non-centered clique with some additional dotted lines on 
Fig.~\ref{fig:non-centered}).

\begin{figure}[H]
\begin{minipage}{0.4\textwidth}
\small
\begin{tabular}{cc}
$\left(\begin{array}{c@{\ }|@{\ }ccccccccccccccc}
0&0&0&0&0&0&0&0&0&0&0&0&0&0&0&0\\
\hline
0&1&0&1&0&1&0&1&0&1&0&1&0&1&0&1\\
0&0&1&1&0&0&1&1&0&0&1&1&0&0&1&1\\
0&1&1&0&0&1&1&0&0&1&1&0&0&1&1&0\\
0&0&0&0&1&1&1&1&0&0&0&0&1&1&1&1\\
0&1&0&1&1&0&1&0&0&1&0&1&1&0&1&0\\
0&0&1&1&1&1&0&0&0&0&1&1&1&1&0&0\\
0&1&1&0&1&0&0&1&0&1&1&0&1&0&0&1\\
0&0&0&0&0&0&0&0&1&1&1&1&1&1&1&1\\
0&1&0&1&0&1&0&1&1&0&1&0&1&0&1&0\\
0&0&1&1&0&0&1&1&1&1&0&0&1&1&0&0\\
0&1&1&0&0&1&1&0&1&0&0&1&1&0&0&1\\
0&0&0&0&1&1&1&1&1&1&1&1&0&0&0&0\\
0&1&0&1&1&0&1&0&1&0&1&0&0&1&0&1\\
0&0&1&1&1&1&0&0&1&1&0&0&0&0&1&1\\
0&1&1&0&1&0&0&1&1&0&0&1&0&1&1&0
\end{array}\right)$
& 
$\begin{array}{l}
 \\1\\2\\3\\4\\5\\6\\7\\8\\9\\10\\11\\12\\13\\14\\15
\end{array}$
\end{tabular}  
\end{minipage}
\hfil
\begin{minipage}{0.4\textwidth}
  \includegraphics[width=\textwidth]{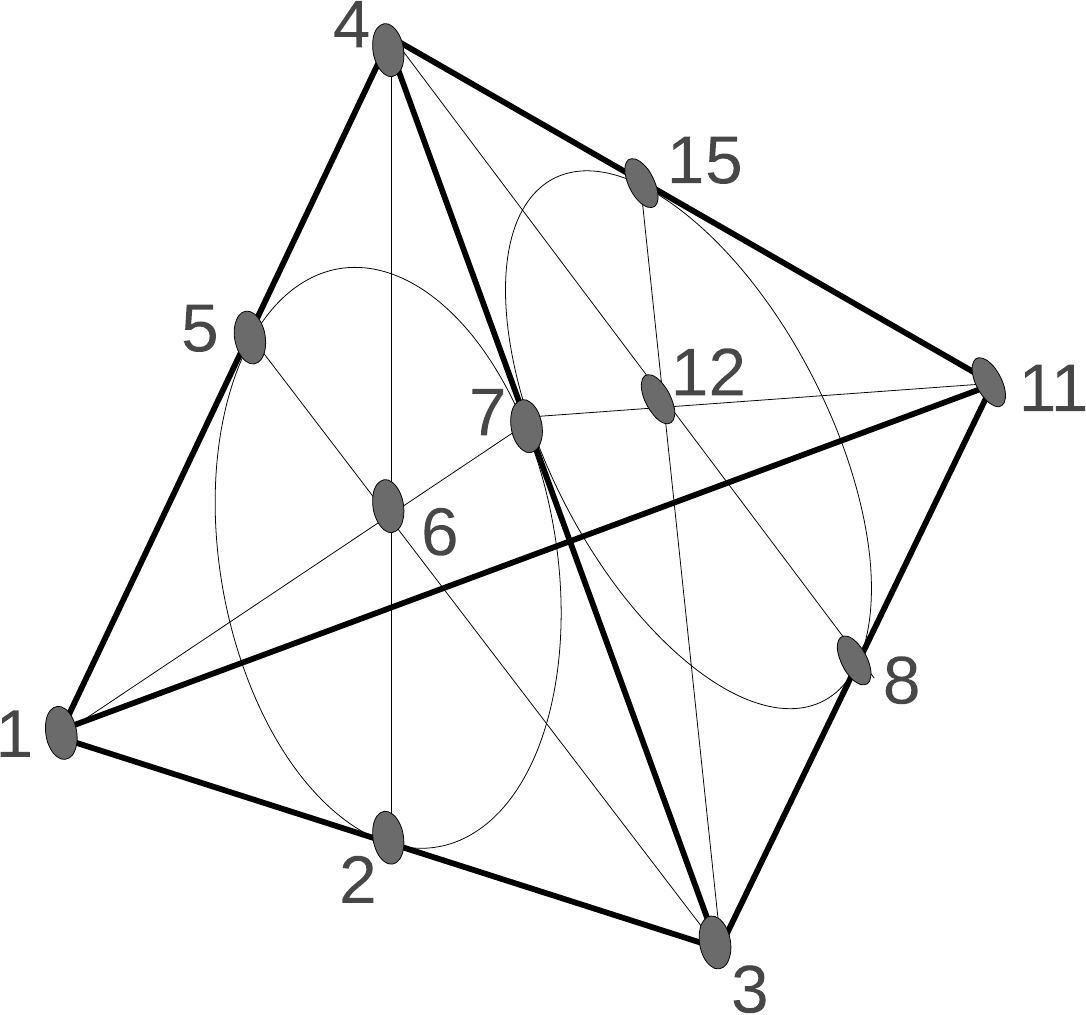}
\end{minipage}
\caption{Clique (C1) from Theorem \ref{centered4-15} and its Hadamard matrix}
\label{fig:clique1}
\end{figure}

\begin{figure}[H]
\begin{minipage}{0.4\textwidth}
\small
\begin{tabular}{cc}
$\left(\begin{array}{c@{\ }|@{\ }ccccccccccccccc}
0&0&0&0&0&0&0&0&0&0&0&0&0&0&0&0\\
\hline
0&1&0&1&0&1&0&1&0&1&0&1&0&1&0&1\\
0&0&1&1&0&0&1&1&0&0&1&1&0&0&1&1\\
0&1&1&0&0&1&1&0&0&1&1&0&0&1&1&0\\
0&0&0&0&1&1&1&1&0&0&0&0&1&1&1&1\\
0&1&0&1&1&0&1&0&0&1&0&1&1&0&1&0\\
0&0&1&1&1&1&0&0&0&0&1&1&1&1&0&0\\
0&1&1&0&1&0&0&1&0&1&1&0&1&0&0&1\\
0&0&0&0&0&0&0&0&1&1&1&1&1&1&1&1\\
0&1&0&1&0&1&1&0&1&0&1&0&1&0&0&1\\
0&0&1&1&0&0&1&1&1&1&0&0&1&1&0&0\\
0&1&1&0&0&1&0&1&1&0&0&1&1&0&1&0\\
0&0&0&0&1&1&1&1&1&1&1&1&0&0&0&0\\
0&1&0&1&1&0&0&1&1&0&1&0&0&1&1&0\\
0&0&1&1&1&1&0&0&1&1&0&0&0&0&1&1\\
0&1&1&0&1&0&1&0&1&0&0&1&0&1&0&1
\end{array}\right)$
& 
$\begin{array}{l}
 \\1\\2\\3\\4\\5\\6\\7\\8\\9\\10\\11\\12\\13\\14\\15
\end{array}$
\end{tabular}  
\end{minipage}
\hfil
\begin{minipage}{0.4\textwidth}
  \includegraphics[width=\textwidth]{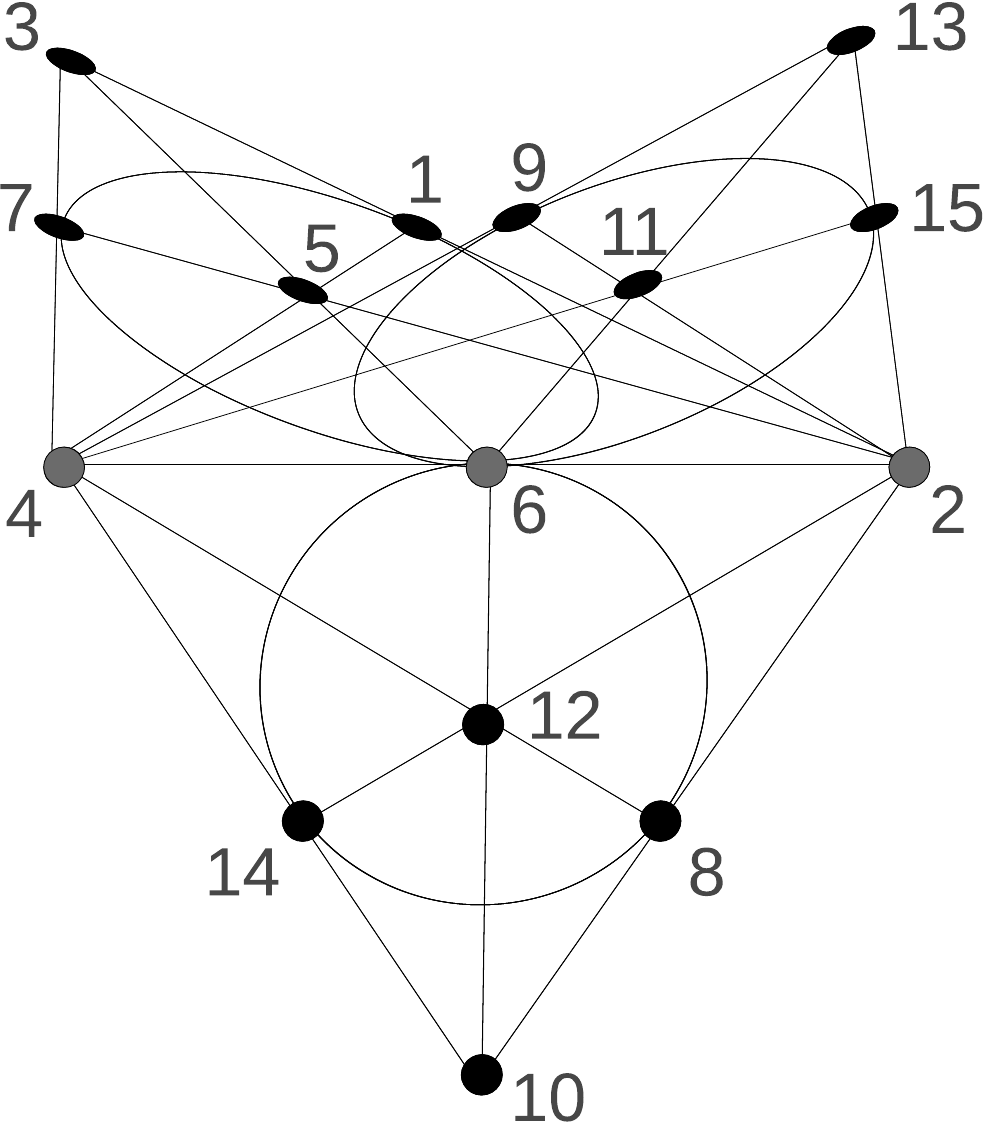}
\end{minipage}
\caption{Clique (C2) from Theorem \ref{centered4-15} and its Hadamard matrix}
\label{fig:clique2}
\end{figure}

\begin{figure}[H]
\begin{minipage}{0.4\textwidth}
\small
\begin{tabular}{cc}
$\left(\begin{array}{c@{\ }|@{\ }ccccccccccccccc}
0&0&0&0&0&0&0&0&0&0&0&0&0&0&0&0\\
\hline 
0&1&0&1&0&1&0&1&0&1&0&1&0&1&0&1\\
0&0&1&1&0&0&1&1&0&0&1&1&0&0&1&1\\
0&1&1&0&0&1&1&0&0&1&1&0&0&1&1&0\\
0&0&0&0&1&1&1&1&0&0&0&0&1&1&1&1\\
0&1&0&1&1&0&1&0&0&1&0&1&1&0&1&0\\
0&0&1&1&1&1&0&0&0&0&1&1&1&1&0&0\\
0&1&1&0&1&0&0&1&0&1&1&0&1&0&0&1\\
0&0&0&0&0&0&0&0&1&1&1&1&1&1&1&1\\
0&0&0&0&1&1&1&1&1&1&1&1&0&0&0&0\\
0&0&1&1&0&1&0&1&1&1&0&0&1&0&1&0\\
0&0&1&1&1&0&1&0&1&1&0&0&0&1&0&1\\
0&1&0&1&0&1&1&0&1&0&1&0&1&0&0&1\\
0&1&0&1&1&0&0&1&1&0&1&0&0&1&1&0\\
0&1&1&0&0&0&1&1&1&0&0&1&1&1&0&0\\
0&1&1&0&1&1&0&0&1&0&0&1&0&0&1&1\\
\end{array}\right)$
& 
$\begin{array}{l}
 \\1\\2\\3\\4\\5\\6\\7\\8\\9\\10\\11\\12\\13\\14\\15
\end{array}$
\end{tabular}  
\end{minipage}
\hfil
\begin{minipage}{0.4\textwidth}
\includegraphics[width=\textwidth]{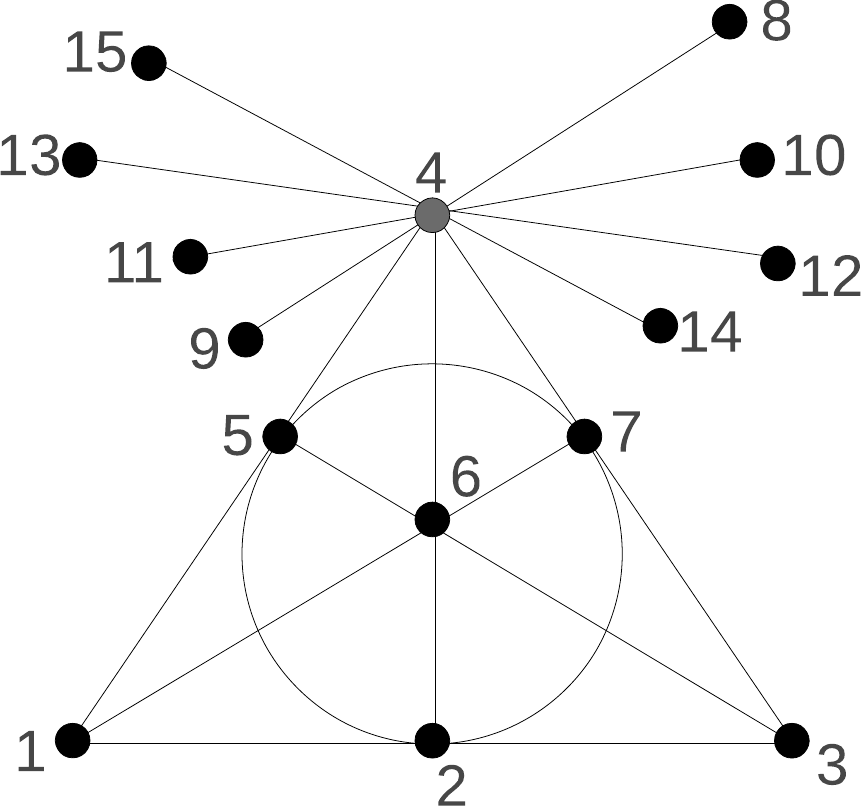}
\end{minipage}
\caption{Clique (C3) from Theorem \ref{centered4-15} and its Hadamard matrix}
\label{fig:clique3}
\end{figure}

\begin{figure}[H]
\begin{minipage}{0.4\textwidth}
\small
\begin{tabular}{cc}
$\left(\begin{array}{c@{\ }|@{\ }ccccccccccccccc}
0&0&0&0&0&0&0&0&0&0&0&0&0&0&0&0\\
\hline
0&1&0&1&0&1&0&1&0&0&0&0&1&1&1&1\\
0&0&1&1&0&0&1&1&0&0&1&1&0&0&1&1\\
0&1&1&0&0&1&1&0&0&1&0&1&0&1&0&1\\
0&0&0&0&1&1&1&1&0&0&1&1&1&1&0&0\\
0&1&0&1&1&0&1&0&0&1&1&0&0&1&1&0\\
0&0&1&1&1&1&0&0&0&1&1&0&1&0&0&1\\
0&1&1&0&1&0&0&1&0&1&0&1&1&0&1&0\\
0&0&0&0&0&0&0&0&1&1&1&1&1&1&1&1\\
0&1&0&1&0&1&0&1&1&1&1&1&0&0&0&0\\
0&0&1&1&0&0&1&1&1&1&0&0&1&1&0&0\\
0&1&1&0&0&1&1&0&1&0&1&0&1&0&1&0\\
0&0&0&0&1&1&1&1&1&1&0&0&0&0&1&1\\
0&1&0&1&1&0&1&0&1&0&0&1&1&0&0&1\\
0&0&1&1&1&1&0&0&1&0&0&1&0&1&1&0\\
0&1&1&0&1&0&0&1&1&0&1&0&0&1&0&1\\
\end{array}\right)$
& 
$\begin{array}{l}
 \\1\\2\\3\\4\\5\\6\\7\\8\\9\\10\\11\\12\\13\\14\\15
\end{array}$
\end{tabular}  
\end{minipage}
\hfil
\begin{minipage}{0.4\textwidth}
\includegraphics[width=\textwidth]{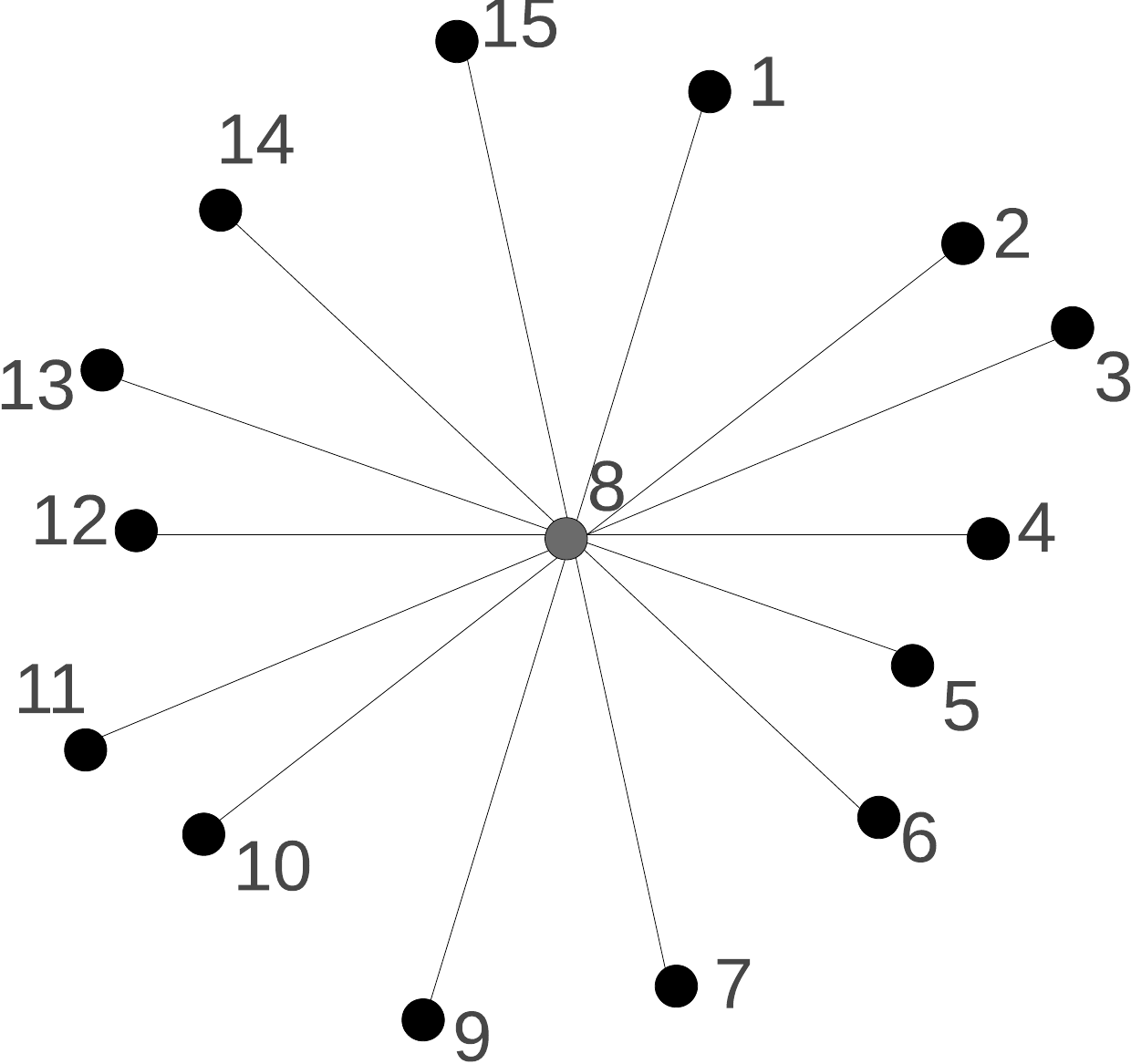}
\end{minipage}
\caption{Clique (C4) from Theorem \ref{centered4-15} and its Hadamard matrix}
\label{fig:clique4}
\end{figure}

\begin{figure}[H]
\begin{minipage}{0.4\textwidth}
\small
\begin{tabular}{cc}
$\left(\begin{array}{c@{\ }|@{\ }ccccccccccccccc}
0&0&0&0&0&0&0&0&0&0&0&0&0&0&0&0\\
\hline
0&1&0&1&0&1&0&1&0&1&0&1&0&1&0&1\\
0&0&1&1&0&0&1&1&0&0&1&1&0&0&1&1\\
0&1&1&0&0&1&1&0&0&1&1&0&0&1&1&0\\
0&0&0&0&1&1&1&1&0&0&0&0&1&1&1&1\\
0&1&0&1&1&0&1&0&0&1&0&1&1&0&1&0\\
0&0&1&1&1&1&0&0&0&0&1&1&1&1&0&0\\
0&1&1&0&1&0&0&1&0&1&1&0&1&0&0&1\\
0&0&0&0&0&0&0&0&1&1&1&1&1&1&1&1\\
0&0&0&1&0&1&1&1&1&1&1&0&1&0&0&0\\
0&0&1&0&1&1&1&0&1&1&0&1&0&0&0&1\\
0&0&1&1&1&0&0&1&1&1&0&0&0&1&1&0\\
0&1&0&0&1&0&1&1&1&0&1&1&0&1&0&0\\
0&1&0&1&1&1&0&0&1&0&1&0&0&0&1&1\\
0&1&1&0&0&1&0&1&1&0&0&1&1&0&1&0\\
0&1&1&1&0&0&1&0&1&0&0&0&1&1&0&1
\end{array}\right)$
& 
$\begin{array}{l}
 \\1\\2\\3\\4\\5\\6\\7\\8\\9\\10\\11\\12\\13\\14\\15
\end{array}$
\end{tabular}  
\end{minipage}
\hfil
\begin{minipage}{0.4\textwidth}
\includegraphics[width=0.9\textwidth]{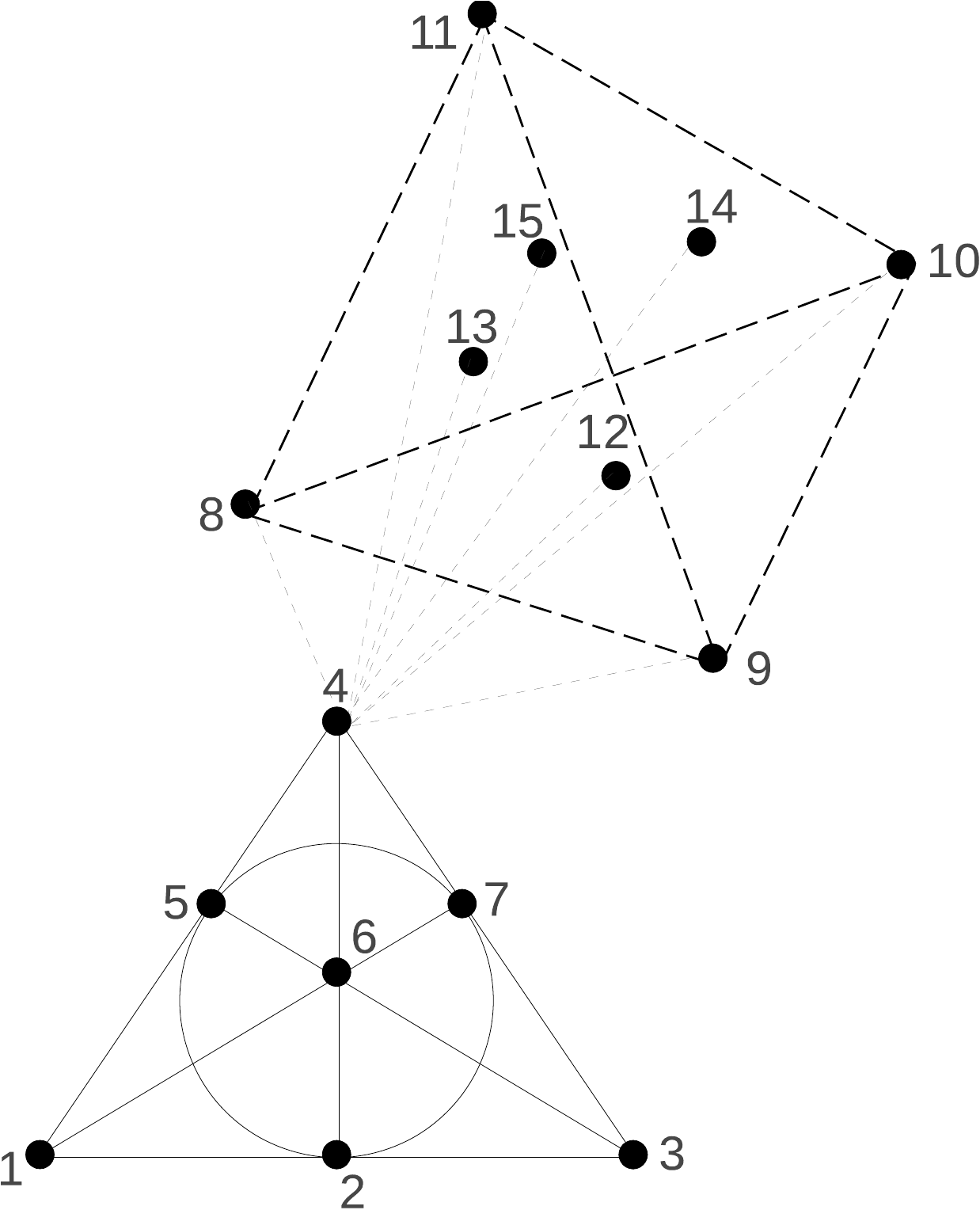}
\end{minipage}
\caption{Non-centered clique and its Hadamard matrix}
\label{fig:non-centered}
\end{figure}

\subsection*{Acknowledgment}
The authors are grateful to Alessandro Montinaro for useful information concerning \cite{PZ}.

\end{document}